\documentclass[10pt,reqno]{amsart}

\usepackage{amsmath,amssymb,amsthm,graphicx,enumerate,xcolor}

\newtheorem{thm}{Theorem}
\newtheorem{lem}[thm]{Lemma}

\theoremstyle{definition}
\newtheorem*{rem*}{Remark}

\begin{document}

\title[Maximal perimeters]{Maximal perimeters of polytope sections\\ and origin-symmetry}
\author{Matthew Stephen}

\address{Department of Mathematical and Statistical Sciences, University of Alberta, Edmonton, Alberta T6G 2G1, Canada} \email{mastephe@ualberta.ca}
\thanks{This research was supported in part by NSERC}

\subjclass[2010]{52B15, 52A20, 52A38}\keywords{convex bodies, convex polytopes, origin-symmetry, sections}

\begin{abstract}
Let $P\subset\mathbb{R}^n$ $(n\geq 3)$ be a convex polytope containing the origin in its interior. Let $\mbox{vol}_{n-2} \big( \mbox{relbd} ( P\cap\lbrace t\xi + \xi^\perp \rbrace ) \big)$ denote the $(n-2)$-dimensional volume of the relative boundary of $P\cap\lbrace t\xi + \xi^\perp \rbrace$ for $t\in\mathbb{R}$, $\xi\in S^{n-1}$. We prove the following: if
\begin{align*}
\mbox{vol}_{n-2} \Big( \mbox{relbd} \big( P\cap\xi^\perp \big) \Big)
	= \max_{t\in\mathbb{R}} \mbox{vol}_{n-2} \Big( \mbox{relbd} \big( P\cap\lbrace t\xi + \xi^\perp \rbrace \big) \Big) 
	\ \ \forall \ \ \xi\in S^{n-1},
\end{align*}
then $P$ is origin-symmetric, i.e. $P = -P$. Our result gives a partial affirmative answer to a conjecture by Makai, Martini, and \'Odor. We also characterize the origin-symmetry of $C^1$ convex bodies in terms of the dual quermassintegrals of their sections; this can be seen as a dual version of the conjecture of Makai et al. 
\end{abstract}

\maketitle

\section{Introduction}

A convex body $K\subset\mathbb{R}^n$ is a convex and compact subset of $\mathbb{R}^n$ with non-empty interior. Convex bodies are extensively studied objects in convex geometry; see the standard references \cite{Ga} and \cite{Sc} for known results and open problems. We say $K$ is origin-symmetric if it is equal to its reflection through the origin, i.e. $K = -K$. Many results for convex bodies depend on the presence of origin-symmetry. For example, the well-known Funk Section Theorem (e.g. Theorem 7.2.6 in \cite{Ga}) states that whenever $K_1,K_2\subset\mathbb{R}^n$ are origin-symmetric convex bodies such that 
\begin{align*}
\mbox{vol}_{n-1}\Big( K_1\cap\xi^\perp\Big) = \mbox{vol}_{n-1}\Big( K_2\cap\xi^\perp\Big) \quad \mbox{for all} \quad \xi\in S^{n-1} ,
\end{align*}
then necessarily $K_1=K_2$. Here, $\xi^\perp := \lbrace x\in\mathbb{R}^n :\, \langle x,\xi\rangle=0\rbrace$.

It is both interesting and useful to find properties which are equivalent to origin-symmetry. Several such characterizations are already known. For example, Falconer \cite{Fa} showed that a convex body $K\subset\mathbb{R}^n$ is origin-symmetric if and only if every hyperplane through the origin splits $K$ into two halves of equal $n$-dimensional volume. Brunn's Theorem implies that if $K$ is origin-symmetric, then
\begin{align}\label{MMO}
\mbox{vol}_{n-1} \Big( K\cap\xi^\perp\Big) 
	= \max_{t\in\mathbb{R}} \mbox{vol}_{n-1} \Big( K\cap\lbrace t\xi + \xi^\perp\rbrace \Big) \quad \mbox{for all} \quad \xi\in S^{n-1} .
\end{align}
Here, $\lbrace t\xi + \xi^\perp\rbrace$ denotes the translate of $\xi^\perp$ containing $t\xi$. Using a particular integral transform, Makai, Martini, and \'Odor \cite{MMO} proved the converse statement: a convex body $K$ which contains the origin in its interior and satisfies (\ref{MMO}) must be origin-symmetric. In fact, they proved a more general statement for star bodies. Ryabogin and Yaskin \cite{RY} gave an alternate proof using Fourier analysis, as well as a new characterization of origin-symmetry via conical sections. A stability version of the result of Makai et al. was established in \cite{SY}.

Makai et al. \cite{MMO} conjectured a further characterization of origin-symmetry in terms of the quermassintegrals of sections. Recall that the quermassintegrals $W_l(K)$ of a convex body $K\subset\mathbb{R}^n$ arise as coefficients in the expansion 
\begin{align*}
\mbox{vol}_n\Big( K + t B_2^n(o,1) \Big) = \sum_{l=0}^n \genfrac(){0pt}{0}{n}{l} W_l(K) \, t^l , \qquad t\geq 0. 
\end{align*}
The addition of sets here is the well-known Minkowski addition 
\begin{align*}
K + t B_2^n(o,1) := \big\lbrace x + t y : \, x\in K, \ y\in B_2^n(o,1) \big\rbrace .
\end{align*}
Refer to \cite{Sc} for a thorough overview of mixed volumes and quermassintegrals. For any $0\leq l\leq n-2$ and $\xi\in S^{n-1}$, consider the quermassintegral $W_l\big( (K - t\xi)\cap\xi^\perp \big)$ of the $(n-1)$-dimensional convex body $(K - t\xi)\cap\xi^\perp$ in $\xi^\perp$. If $K$ is origin-symmetric, then the monotonicity and positive multilinearity of mixed volumes together with the Alexandroff-Fenchel inequality imply 
\begin{align}\label{MMO conjecture}
W_l\Big( K\cap\xi^\perp \Big) = \max_{t\in\mathbb{R}} W_l\Big( (K - t\xi)\cap\xi^\perp \Big) \quad \mbox{for all} \quad \xi\in S^{n-1}. 
\end{align}
For $l=0$, (\ref{MMO conjecture}) is equivalent to (\ref{MMO}), as $W_0\big( (K - t\xi)\cap\xi^\perp \big)$ is the $(n-1)$-dimensional volume of $(K - t\xi)\cap\xi^\perp$. Makai et al. conjectured that if $K$ contains the origin in its interior and satisfies (\ref{MMO conjecture}) for any $1\leq l\leq n-2$, it must be origin-symmetric. Makai and Martini \cite{MM} proved a local variant of the conjecture for smooth perturbations of the Euclidean ball; otherwise, the problem is completely open.

In this paper, we consider the case of convex polytopes which satisfy (\ref{MMO conjecture}) for $l=1$. A convex polytope $P\subset\mathbb{R}^n$ is a convex body which is the convex hull of finitely many points. It is common practice to restrict unsolved problems for general convex bodies to the class of polytopes (e.g. \cite{MyR,Y1,Y2,YY,Z}), because polytopes have additional structure. Up to a constant depending on the dimension, $W_1\big( (P - t\xi)\cap\xi^\perp\big)$ is the $(n-2)$-dimensional surface area of the $(n-1)$-dimensional polytope $(P - t\xi)\cap\xi^\perp$ in $\xi^\perp$. Letting $\mbox{vol}_{n-2} \big( \mbox{relbd} ( P\cap\lbrace t\xi + \xi^\perp \rbrace ) \big)$ denote the $(n-2)$-dimensional volume of the relative boundary of $P\cap\lbrace t\xi + \xi^\perp \rbrace$, we prove the following:

\begin{thm}\label{Main SA}
Let $P\subset\mathbb{R}^n$ $(n\geq 3)$ be a convex polytope containing the origin in its interior, and such that
\begin{align}\label{Max_Perim_Property}
\mbox{vol}_{n-2} \Big( \mbox{relbd} \big( P\cap\xi^\perp \big) \Big)
	= \max_{t\in\mathbb{R}} \mbox{vol}_{n-2} \Big( \mbox{relbd} \big( P\cap\lbrace t\xi + \xi^\perp \rbrace \big) \Big) 
\end{align}
for all $\xi\in S^{n-1}$. Then $P = -P$. 
\end{thm}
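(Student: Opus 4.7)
The guiding idea is that for every $\xi\in S^{n-1}$, the condition that $p_\xi(t):=\mbox{vol}_{n-2}\big(\mbox{relbd}(P\cap\{t\xi+\xi^\perp\})\big)$ attains its maximum at $t=0$ produces a first-order vanishing condition. For $\xi$ lying in a full-measure subset $\Xi\subset S^{n-1}$ avoiding the facet normals of $P$ (and some additional measure-zero exceptional set), the combinatorial type of $P\cap\{t\xi+\xi^\perp\}$ is constant for $t$ in a neighbourhood of $0$, so $p_\xi$ is a polynomial on that neighbourhood. The maximality condition therefore forces $p_\xi'(0)=0$ for every $\xi\in\Xi$.

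The second step is to compute $p_\xi'(0)$ explicitly. Writing $p_\xi(t)=\sum_F\phi_F(t)$ with $\phi_F(t):=\mbox{vol}_{n-2}(F\cap\{t\xi+\xi^\perp\})$ and the sum ranging over facets $F$ of $P$ meeting $\xi^\perp$ in an $(n-2)$-dimensional set, each $\phi_F$ is the slice-volume function of the $(n-1)$-dimensional polytope $F$, viewed inside its affine hull $H_F$, along the $(n-2)$-plane family $H_F\cap\{t\xi+\xi^\perp\}$. After rescaling $t$ by the angle factor $|\sin\angle(u_F,\xi)|$, the standard variational formula for the derivative of a polytope slice-volume function expresses $\phi_F'(0)$ as a signed sum over the $(n-2)$-faces (``ridges'') $E\subset F$ of $P$, each contribution being proportional to $\mbox{vol}_{n-3}(E\cap\xi^\perp)$ times a trigonometric weight depending on $\xi$, $u_F$, and the outer normal of $E$ within $H_F$. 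Summing over $F$ and collecting contributions of each ridge from its two adjacent facets yields
\begin{equation*}
p_\xi'(0) \,=\, \sum_{E}\, \mbox{vol}_{n-3}(E\cap\xi^\perp)\cdot T_E(\xi),
\end{equation*}
where the sum is over the ridges $E$ of $P$ and $T_E(\xi)$ is an explicit trigonometric function determined by the pair of facets sharing $E$.

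The final step is to deduce $P=-P$ from the vanishing of this expression for every $\xi\in\Xi$. I would reinterpret the identity as a cosine-type spherical integral-transform equation applied to a signed measure on $S^{n-1}$ that encodes the ridge structure of $P$ together with the signed positions of the ridges relative to the origin. Splitting the measure into even and odd parts under the antipodal map and invoking the injectivity of the cosine transform on the odd part (in the spirit of \cite{RY}) should force the odd part to vanish. This pairs each ridge of $P$ with an opposite ridge of equal contribution, which, propagated to the codimension-one skeleton, gives a pairing of facets under $x\mapsto-x$ and hence $P=-P$.

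The principal obstacle will be carrying out the explicit variational formula in the display above in a form compatible with the transform-injectivity argument, and correctly tracking the trigonometric weights $T_E(\xi)$ so that the parity decomposition is transparent. Handling the exceptional directions $\xi\notin\Xi$ by continuity or distributional extension adds further bookkeeping, and one must also take care that the maximum condition is used on both sides of $t=0$ so that one-sided subtleties at these bad $\xi$ disappear. A minor additional point is to rule out that the symmetry obtained from the transform argument is merely central around a translated centre; here it should suffice to note that $0\in\mbox{int}(P)$ and to combine with the known volume result of Makai, Martini, and \'Odor.
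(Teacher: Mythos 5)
Your first two steps track the paper's own starting point: for generic $\xi$ the combinatorial type of $P\cap\{t\xi+\xi^\perp\}$ is locally constant in $t$, the interior maximum forces $p_\xi'(0)=0$, and this derivative is a finite sum of contributions attached to the ridges of $P$ met by $\xi^\perp$ (the paper organizes this via a triangulation and Lemma \ref{determinant derivatives}, but your first-variation formulation is the same in spirit). The genuine gap is in your third step. The identity $\sum_E \mbox{vol}_{n-3}(E\cap\xi^\perp)\,T_E(\xi)=0$ is not a cosine-type or spherical-Radon equation for any \emph{fixed} signed measure on $S^{n-1}$: both the factor $\mbox{vol}_{n-3}(E\cap\xi^\perp)$ and the weight $T_E(\xi)$ depend on $\xi$ nonlinearly and only piecewise-algebraically, and the set of contributing ridges itself changes with $\xi$. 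The injectivity theorems you invoke (in the spirit of Ryabogin--Yaskin) apply to genuine integral transforms of odd functions on the sphere; the $l=0$ case of the Makai--Martini--\'Odor conjecture reduces to exactly such a transform of a power of $\rho_K$, but the $l=1$ case does not, and that is precisely why the general conjecture remains open and why the paper restricts to polytopes. The paper's replacement for your step 3 is a local, asymptotic argument: assuming some ridge $G_0$ has no radially opposite partner, it constructs a special $\xi_0$ whose hyperplane cuts $G_0$ near a single vertex and meets no other ridge parallel to $G_0$, extends the cleared-denominator identity $\Phi(\xi)=0$ analytically from a cap to all of $S^{n-1}$ minus finitely many great subspheres (Lemma \ref{zero_lemma}), and then rescales by $\delta^{n-2}$ while letting $\xi$ tend to a normal direction $\widetilde n$ of $G_0$; in this limit only the two facets adjacent to $G_0$ survive and their combined contribution is shown to be nonzero, a contradiction. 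Without some substitute for this (or a valid injectivity statement), your argument does not close.

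Even granting a pairing of ridges, your endgame is also off. What the first-order condition can deliver is only that each ridge $G$ is paired with $\mbox{reflec}(G,t)$, a \emph{homothet} of $-G$ positioned so that lines through corresponding vertices pass through the origin --- not with $-G$ itself. Upgrading this radial pairing to $\widetilde v=-v$ for every vertex is the entire Second Part of the paper (a similar-triangles argument propagated along an edge path from $v$ to $\widetilde v$), not a minor bookkeeping point. And you cannot dispose of it by citing the Makai--Martini--\'Odor volume theorem: its hypothesis is maximality of section \emph{volumes} at $t=0$, which is not assumed here and does not follow from the perimeter hypothesis.
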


We introduce some notation and simple lemmas in Section 2. The proof of Theorem \ref{Main SA} is presented in Section 3. Finally, in Section 4, we briefly explain how to characterize the origin-symmetry of $C^1$ convex bodies using the dual quermassintegrals of sections; this is a dual version of the conjecture of Martini et al. \cite{MMO}.

\section{Some Notation \& Auxiliary Lemmas}

The origin in $\mathbb{R}^n$ is denoted by $o$. The affine hull and linear span of a set $A\subset\mathbb{R}^n$ are respectively denoted by $\mbox{aff}(A)$ and $\mbox{span}(A)$. We let $\mathbb{R}x := \mbox{span}(x)$ be the line through $x\in\mathbb{R}^n\backslash\lbrace 0\rbrace$ and the origin. We use $|\,\cdot\, |_2$ for the Euclidean norm, $B_2^n(x,r)$ for the Euclidean ball of radius $r>0$ centred at $x\in\mathbb{R}^n$, and $S^{n-1}$ for the unit sphere. For $\xi\in S^{n-1}$, we define $\xi^+ := \lbrace x\in\mathbb{R}^n:\, \langle x,\xi\rangle\geq 0\rbrace$, $\xi^- := \lbrace x\in\mathbb{R}^n:\, \langle x,\xi\rangle\leq 0\rbrace$, and $S^{n-1}(\xi,\varepsilon) := S^{n-1}\cap B_2^n(\xi,\varepsilon)$ for small $\varepsilon > 0$. Finally, $[\xi_1,\xi_2] := S^{n-1} \cap \lbrace \alpha\xi_1 + \beta\xi_2 : \, \alpha,\beta \geq 0 \rbrace$ gives the geodesic connecting linearly independent $\xi_1,\xi_2\in S^{n-1}$.

For any $(n-2)$-dimensional polytope $G\subset\mathbb{R}^n$ which does not contain the origin, define $\eta_G\in S^{n-1}$ to be the unique unit vector for which 
\begin{itemize}
\item the line $\mathbb{R}\eta_G$ and $\mbox{aff}(G)$ intersect orthogonally; 
\item $G\subset\eta_G^-:= \lbrace x\in\mathbb{R}^n:\, \langle x,\eta_G\rangle\leq 0\rbrace$. 
\end{itemize}
For each $t>0$, 
\begin{align*}
\mbox{reflec}(G,t) := \big\lbrace x\in\mathbb{R}^n:\, \langle x,\eta_G\rangle = t \mbox{ and the line } \mathbb{R}x
	\mbox{ interesects } G \big\rbrace 
\end{align*}
is an $(n-2)$-dimensional polytope in $\mathbb{R}^n$; see Figure \ref{reflec}. In words, $\mbox{reflec}(G,t)$ is the homothetic copy of $-G$ lying in $\{ t\eta_G + \eta_G^\perp \}$, so that every line connecting a vertex of $\mbox{reflec}(G,t)$ to the corresponding vertex of $G$ passes through the origin. 
\begin{figure}
\includegraphics[width=0.75\linewidth]{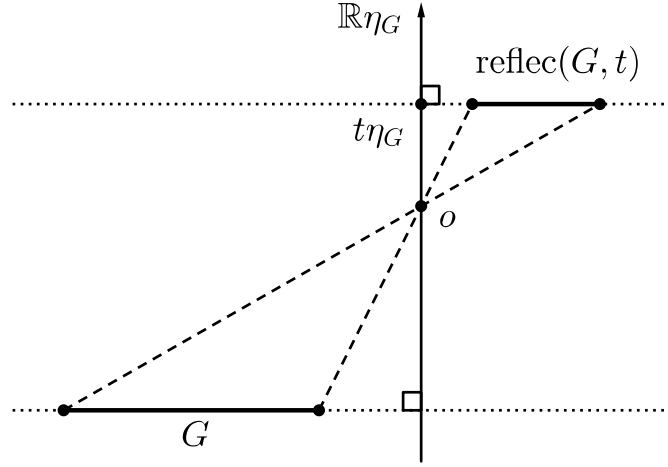}
\caption{The geometric meaning of $\mbox{reflec}(G,t)$.}
\label{reflec}
\end{figure}

\begin{lem}\label{avoiding vertices}
Let $Q\subset\mathbb{R}^n$ be a polytope for which the origin is not a vertex. Let $S^{n-1}(\theta_0,\varepsilon)$ be a spherical cap of radius $\varepsilon >0$ centred at $\theta_0\in S^{n-1}$. There exists $\theta\in S^{n-1}(\theta_0,\varepsilon)$ such that $\theta^\perp$ does not contain any vertices of $Q$. 
\end{lem}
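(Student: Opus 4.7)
The plan is to argue by a codimension/measure count on the sphere. Since $Q$ is a polytope, it has only finitely many vertices, say $v_1,\ldots,v_N$. The hypothesis that the origin is not a vertex of $Q$ means $v_i \neq o$ for every $i$. For each such $i$, observe that $\theta^\perp$ contains $v_i$ if and only if $\langle \theta, v_i\rangle = 0$, so the set of directions $\theta \in S^{n-1}$ for which $\theta^\perp$ contains $v_i$ is exactly the great subsphere $v_i^\perp \cap S^{n-1}$.

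Next I would note that because $v_i \neq o$, the subspace $v_i^\perp$ has dimension $n-1$, and hence $v_i^\perp \cap S^{n-1}$ is a sphere of dimension $n-2$. In particular, it has $(n-1)$-dimensional spherical Lebesgue measure zero. Taking the finite union $\bigcup_{i=1}^{N} \bigl(v_i^\perp \cap S^{n-1}\bigr)$, we still get a set of measure zero in $S^{n-1}$.

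Finally, the spherical cap $S^{n-1}(\theta_0,\varepsilon)$ has positive $(n-1)$-dimensional spherical measure, so it cannot be contained in the null set above. Therefore, there exists $\theta \in S^{n-1}(\theta_0,\varepsilon)$ with $\theta \notin v_i^\perp$ for every $i$, which means that $\theta^\perp$ avoids every vertex of $Q$, as desired.

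There is no real obstacle here; the only point that genuinely uses the hypothesis is the step where we assert $\dim(v_i^\perp) = n-1$, which fails precisely when $v_i = o$. Alternatively, one could avoid measure theory entirely by picking $\theta$ inductively: for each $i$ in turn, the subsphere $v_i^\perp \cap S^{n-1}$ is closed and nowhere dense in $S^{n-1}$, so a Baire category argument on the open cap $S^{n-1}(\theta_0,\varepsilon)$ produces the required $\theta$.
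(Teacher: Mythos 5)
Your argument is correct and is essentially the same as the paper's, which simply observes that the bad directions form the finite union $\bigcup_i \left( v_i^\perp \cap S^{n-1} \right)$ of great subspheres and picks $\theta$ from the nonempty complement of this union in the cap. You have merely made explicit (via measure or Baire category) why that complement is nonempty, which the paper takes as obvious.
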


\begin{proof}
If $u_1,\ldots, u_d$ are the vertices of $Q$, choose any $\theta$ from the non-empty set $S^{n-1}(\theta_0,\varepsilon) \backslash \left( u_1^\perp, \ldots, u_d^\perp \right)$. 
\end{proof}

The proof of the following lemma is trivial.

\begin{lem}\label{determinant derivatives}
Let $I\subset\mathbb{R}$ be an open interval. Let $\lbrace f_j\rbrace_{j=1}^n$ be a collection of differentiable $\mathbb{R}^n$-valued functions on $I$. Define $F(t) := \det \big(f_1(t), \ldots, f_n(t)\big)$. Then $F$ is differentiable on $I$ with
\begin{align*}
F'(t) = \sum_{j=1}^n \det\Big( f_1(t), \ldots, f_{j-1}(t), f_j'(t), f_{j+1}(t), \ldots, f_n(t) \Big) .
\end{align*}
\end{lem}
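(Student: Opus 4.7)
The plan is to reduce everything to the product rule in one real variable. The cleanest route is to expand $F(t)$ via the Leibniz formula,
\begin{equation*}
F(t) \;=\; \sum_{\sigma \in S_n} \mbox{sgn}(\sigma)\, \prod_{i=1}^{n} \bigl(f_i(t)\bigr)_{\sigma(i)} ,
\end{equation*}
where $(f_i(t))_k$ denotes the $k$-th coordinate of $f_i(t)\in\mathbb{R}^n$ and $S_n$ is the symmetric group on $n$ letters. Each coordinate function $t\mapsto (f_i(t))_k$ is differentiable on $I$ by hypothesis, so the product rule applied term-by-term in this finite sum immediately shows that $F$ is differentiable on $I$, and yields an expression for $F'(t)$ in which each original summand is replaced by the sum, over $j\in\{1,\ldots,n\}$, of the same product with the factor $(f_j(t))_{\sigma(j)}$ swapped for $(f_j'(t))_{\sigma(j)}$.

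The only remaining step is a regrouping: for each fixed $j$, I would collect all terms in which the derivative $f_j'$ appears and recognize the resulting sum over $\sigma\in S_n$ as the Leibniz expansion of
\begin{equation*}
\det\bigl( f_1(t), \ldots, f_{j-1}(t), f_j'(t), f_{j+1}(t), \ldots, f_n(t) \bigr) .
\end{equation*}
Summing over $j$ produces exactly the claimed identity.

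A conceptually slicker variant bypasses the Leibniz formula altogether. The map $D\colon(\mathbb{R}^n)^n\to\mathbb{R}$, $(v_1,\ldots,v_n)\mapsto\det(v_1,\ldots,v_n)$, is multilinear and therefore smooth, with $j$-th partial derivative at $(v_1,\ldots,v_n)$ applied to a direction $w$ equal to $\det(v_1,\ldots,v_{j-1},w,v_{j+1},\ldots,v_n)$. Since $F = D\circ(f_1,\ldots,f_n)$, the multivariable chain rule produces the stated formula in one line. As the author observes, the lemma is trivial; there is no substantive obstacle, only some routine index bookkeeping in whichever of the two approaches one prefers.
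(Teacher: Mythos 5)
Your argument is correct; the paper omits the proof entirely (declaring it trivial), and your Leibniz-formula-plus-product-rule computation, or equivalently the multilinearity/chain-rule variant, is exactly the standard argument the author has in mind. Nothing further is needed.
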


\section{Proof of Theorem \ref{Main SA}}

Let $P$ be a convex polytope containing the origin in its interior and satisfying (\ref{Max_Perim_Property}) for all $\xi\in S^{n-1}$. Our proof has two distinct parts.

We first need to prove that 
\begin{align}\label{1st Part}
\mbox{reflec}(G,t) \ \mbox{is an} \ (n-2)\mbox{-dimensional face of} \ P \ \mbox{for some} \ t>0
\end{align}
whenever $G$ is an $(n-2)$-dimensional face of $P$. To the contrary, we suppose $G_0$ is an $(n-2)$-dimensional face of $P$ for which (\ref{1st Part}) is false. We find a special spherical cap $S^{n-1}(\xi_0,\varepsilon)$. For every $\xi\in S^{n-1}(\xi_0,\varepsilon)$, $\xi^\perp$ misses all the vertices of $P$, while intersecting $G_0$ and no other $(n-2)$-dimensional faces which are parallel to $G_0$. We derive a ``nice" equation from (\ref{Max_Perim_Property}) which is valid for all $\xi\in S^{n-1}(\xi_0,\varepsilon)$. Forgetting the geometric meaning, we analytically extend this nice equation to all $\xi\in S^{n-1}$, excluding a finite number of great subspheres. Studying the behaviour near one of these subspheres, we arrive at our contradiction.

We conclude that for every vertex $v$ of $P$, the line $\mathbb{R}v$ contains another vertex $\widetilde{v}$ of $P$. In the second part of our proof, we prove that $\widetilde{v} = -v$. Hence, $P$ is origin-symmetric.

\subsection{First Part}

Assume there is an $(n-2)$-dimensional face $G_0$ of $P$ such that $\mbox{reflec}(G_0,t)$ is not an $(n-2)$-dimensional face of $P$ for any $t>0$. By the convexity of $P$, the intersection of $\mbox{aff}(o,G_0)$ with $P$ contains at most one other $(n-2)$-dimensional face of $P$ (besides $G_0$) which is parallel to $G_0$. If such a face exists, it must lie in $\lbrace t\eta_G + \eta_G^\perp \rbrace$ for some $t>0$ because $P$ contains the origin in its interior. We still allow that $\mbox{reflec}(G_0,t)$ may have a non-empty intersection with another $(n-2)$-dimensional face of $P$ for some $t>0$. However, we can additionally assume without loss of generality that $\mbox{reflec}(G_0,t)$ is not contained within an $(n-2)$-dimensional face of $P$ for any $t>0$.

\begin{lem}\label{plane_lemma}
There is a $\xi_0\in S^{n-1}$ such that
\begin{enumerate}[(i)]
\item the hyperplane $\xi_0^\perp$ does not contain any vertices of $P$;
\item $\xi_0^\perp$ intersects $G_0$ but no other $(n-2)$-dimensional faces of $P$ parallel to $G_0$;
\item there is exactly one vertex $v$ of $G_0$ contained in $\xi_0^+:= \lbrace x\in\mathbb{R}^n:\, \langle x,\xi_0\rangle\geq 0\rbrace$. 
\end{enumerate}
\end{lem}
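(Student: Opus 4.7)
My plan is to construct $\xi_0$ by tilting $\eta_{G_0}$ within a plane spanned by $\eta_{G_0}$ and a carefully chosen direction in $\mbox{aff}(G_0)$, then shrink the parameter range by a genericity argument. Let $L \subset \eta_{G_0}^\perp$ be the $(n-2)$-dimensional linear direction of $\mbox{aff}(G_0)$, so every vertex $v$ of $G_0$ satisfies $\langle v, \eta_{G_0}\rangle = -c_0$ for a common $c_0 > 0$. Pick $\omega \in L \cap S^{n-1}$ generically so that the values $\langle v_j, \omega\rangle$ at the vertices of $G_0$ are pairwise distinct; after flipping $\omega$ if necessary, the maximum $M_1 := \langle v, \omega\rangle$ is positive and attained at a unique vertex $v$, with second-largest value $M_2 < M_1$. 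Then set $\xi(\phi) := \cos\phi\,\eta_{G_0} + \sin\phi\,\omega$ for $\phi \in (0, \pi/2)$.

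A direct computation gives $\langle v_j, \xi(\phi)\rangle > 0 \iff \langle v_j, \omega\rangle > c_0 \cot\phi$, so condition (iii) holds for every $\phi$ in the open interval $I := \{\phi \in (0,\pi/2) : c_0 \cot\phi \in (\max(M_2, 0), M_1)\}$. To impose (i), I exclude finitely many $\phi \in I$, namely the at most one solution of $\cos\phi \langle u, \eta_{G_0}\rangle + \sin\phi \langle u, \omega\rangle = 0$ coming from each vertex $u$ of $P$ outside $G_0$. For (ii), any $(n-2)$-face $F \neq G_0$ of $P$ parallel to $G_0$ lies in $\{\langle x, \eta_{G_0}\rangle = t_F\}$ for some $t_F \neq -c_0$; the analogous computation gives that $\xi(\phi)^\perp$ meets $F$ precisely when $-t_F\cot\phi \in [m_F, M_F]$, with $m_F, M_F$ the extremal values of $\langle \cdot, \omega\rangle$ on the vertices of $F$. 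Each such face $F$ therefore contributes a closed bad sub-interval of $I$.

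The main obstacle is showing that the finitely many closed bad intervals from (ii) do not entirely cover $I$. I would exploit both the freedom to perturb $\omega$ within $L \cap S^{n-1}$ and the standing reduction that $\mbox{reflec}(G_0, t)$ is neither an $(n-2)$-face of $P$ nor contained in one for any $t > 0$. Indeed, if the bad interval contributed by some parallel face $F$ were to cover $I$ for every generic $\omega$, then the $L$-coordinates of the vertices of $F$ would be forced into exactly the homothetic alignment with those of $G_0$ prescribed by $\mbox{reflec}(G_0, t_F)$, contradicting the hypothesis on $G_0$. Hence a generic $\omega$ makes each bad interval have at least one endpoint strictly inside $I$, leaving an uncovered sub-interval; any $\phi$ in this uncovered set that also avoids the finitely many isolated points from (i) yields the required $\xi_0$.
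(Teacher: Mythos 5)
Your construction has a genuine gap at condition (ii), and the standing hypothesis on $G_0$ is not strong enough to close it. Every hyperplane $\xi(\phi)^\perp$ in your one‑parameter family contains the $(n-2)$-dimensional subspace $\eta_{G_0}^\perp\cap\omega^\perp$, and in particular the line $\mathbb{R}\theta$ orthogonal to $\mbox{span}(\eta_{G_0},L)=\mbox{aff}(o,G_0)$. Hence whether $\xi(\phi)^\perp$ meets a parallel face $F$ depends only on $t_F$ and the $\omega$-extent $[m_F,M_F]$; the offset of $F$ in the $\theta$-direction is invisible. But that offset is exactly what exempts $F$ from the $\mbox{reflec}$ hypothesis: $\mbox{reflec}(G_0,t)$ always lies in $\mbox{aff}(o,G_0)$, so a parallel face with nonzero $\theta$-coordinate can never contain $\mbox{reflec}(G_0,t_F)$ regardless of its $\omega$-extent. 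Your closing claim that ``covering $I$ for every generic $\omega$ forces homothetic alignment'' is therefore false — containment of one interval in another is an open, robust condition, not an exact alignment. Concretely, for $n=3$ there is no freedom in $\omega$ at all ($L\cap S^2$ is a pair of antipodes, and the sign is fixed by requiring $M_1>0$). In coordinates $(\theta,\eta,\omega)$ take $P=\mbox{conv}\big(\{(0,-1,0),(0,-1,1)\}\cup\{(5,1,-10),(5,1,10)\}\cup\{(-1,0,0)\}\big)$, with $G_0$ the first edge and $F$ the second. One checks $o\in\mbox{int}(P)$, that $F$ is the only other edge parallel to $G_0$, and that $\mbox{reflec}(G_0,t)\subset\{x_1=0\}$ is never contained in $F\subset\{x_1=5\}$, so the standing assumptions hold. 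Yet $\xi(\phi)^\perp$ meets $F$ whenever $|\cot\phi|\le 10$, which contains all of $I$ (where $\cot\phi\in(0,1)$); no $\phi$ works. A smaller instance of the same defect affects (i): a vertex of $P$ lying on $\mathbb{R}\theta$ (or anywhere in $\eta_{G_0}^\perp\cap\omega^\perp$) lies on every hyperplane of your family, so the bad set for that vertex is not a single $\phi$.

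The paper's proof is built precisely to evade this. It works first inside the hyperplane $\theta^\perp=\mbox{aff}(o,G_0)$, where convexity guarantees at most one other parallel $(n-2)$-face of $P$; that single competitor is separated off using the assumption that $\mbox{reflec}(G_0,t_0)$ is not contained in it. This produces an $(n-2)$-dimensional subspace $L\subset\theta^\perp$ meeting $G_0$, missing the competitor, cutting off one vertex $v$, and (after a perturbation via Lemma \ref{avoiding vertices}) missing all vertices. The final hyperplane is then taken to contain $L$ and to be tilted so that $\xi_0^\perp\cap P$ stays inside a thin slab $\theta^\perp_\alpha$; all parallel faces not lying in $\mbox{aff}(o,G_0)$ are missed automatically because they avoid the slab. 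To repair your argument you would need your hyperplanes to be nearly parallel to $\mbox{aff}(o,G_0)$ on $P$, rather than containing its normal direction.
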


\begin{proof}
Choose $\theta\in S^{n-1}$ so that $\theta^\perp = \mbox{aff}(o,G_0)$. Let $\eta := \eta_{G_0}$ be the unit vector defined as before.

There are two possibilities: either $\lbrace x\in\mathbb{R}^n:\, \langle x,\eta\rangle =t\rbrace \cap \theta^\perp$ does not contain an $(n-2)$-dimensional face of $P$ for any $t>0$, or it does for exactly one $t_0>0$. If the first case is true, we can of course choose an affine $(n-3)$-dimensional subspace $\widetilde{L}$ lying within $\mbox{aff}(G_0)$ which does not pass through any vertices of $G_0$, and separates exactly one vertex $v\in G_0$ from the others.

Suppose the second case is true, i.e. $H:=\lbrace x\in\mathbb{R}^n:\, \langle x,\eta\rangle =t_0\rbrace \cap \theta^\perp$ contains an $(n-2)$-dimensional face $G$ of $P$. By definition and assumption, $\mbox{reflec}(G_0,t_0)$ lies in $H$ and is not contained in $G$. We can choose $\widetilde{L}\subset H$ to be an $(n-3)$-dimensional affine subspace which, within $H$, strictly separates exactly one vertex $\widetilde{v}\in \mbox{reflec}(G_0,t_0)$ from both $G$ and the remaining vertices of $\mbox{reflec}(G_0,t_0)$. Let $v$ be the vertex of $G_0$ lying on the line $\mathbb{R}\widetilde{v}$.

Regardless of which case was true, set $L:=\mbox{aff}(o,\widetilde{L})\subset\theta^\perp$. The $(n-2)$-dimensional subspace $L$ intersects $G_0$ but no other $(n-2)$-dimensional faces of $P$ parallel to $G_0$, and separates $v$ from the remaining vertices of $G_0$. Perturbing $L$ if necessary (see Lemma \ref{avoiding vertices}), $L$ also does not intersect any vertices of $P$. Choose $\phi\in S^{n-1}\cap\theta^\perp\cap L^\perp$.

Define the slab $\theta^\perp_\alpha := \lbrace x\in\mathbb{R}^n:\, |\langle x,\theta\rangle|\leq\alpha\rbrace$, with $\alpha>0$ small enough so that $\theta^\perp_\alpha$ only contains vertices of $P$ lying in $\theta^\perp$. Necessarily, $\theta^\perp_\alpha$ also only contains the $(n-2)$-dimensional faces of $P$ parallel to $G_0$ which lie entirely in $\theta^\perp$. Choose $\beta>0$ large enough so that the slab $\phi^\perp_\beta$ contains $P$. Let $\xi_0\in S^{n-1}$ be such that $\xi_0^\perp = \mbox{aff}\big( o,\, \alpha\theta + \beta\phi + L\big)$ and $\langle v,\xi_0\rangle \geq 0$. We then have $\xi_0^\perp\cap P\subset \theta^\perp_\alpha$ and $\xi_0^\perp\cap\theta^\perp = L$. It follows from the construction of $\theta^\perp_\alpha$ and $L$ that $\xi_0$ has the desired properties. 
\end{proof}

Let $\lbrace E_i\rbrace_{i\in I}$ and $\lbrace F_j\rbrace_{j\in J}$ respectively be the edges and facets (i.e. $(n-1)$-dimensional faces) of $P$ intersecting $\xi_0^\perp$. Consider a spherical cap $S^{n-1}(\xi_0,\varepsilon)$ of radius $\varepsilon >0$ centred at $\xi_0$. For $\varepsilon >0$ small enough, the set
\begin{align*}
\big\lbrace x\in\mathbb{R}^n:\, |\langle x,\xi\rangle|\leq\varepsilon \mbox{ for some } \xi\in S(\xi_0,\varepsilon) \big\rbrace
\end{align*}
does not contain any vertices of $P$. Consequently, the map 
\begin{align*}
t \mapsto \mbox{vol}_{n-2} \Big( \mbox{relbd} \big( P\cap\lbrace t\xi + \xi^\perp\rbrace \big) \Big) 
	= \sum_{j\in J} \mbox{vol}_{n-2} \big( F_j\cap\lbrace t\xi + \xi^\perp\rbrace \big)
\end{align*}
is differentiable in a neighbourhood of $t=0$ for each $\xi\in S^{n-1}(\xi_0,\varepsilon)$. Therefore, (\ref{Max_Perim_Property}) implies
\begin{align}\label{equality}
\sum_{j\in J} \frac{d}{dt} \mbox{vol}_{n-2} \big( F_j\cap\lbrace t\xi + \xi^\perp\rbrace \big) \bigg|_{t=0} = 0
\end{align}
for every $\xi\in S^{n-1}(\xi_0,\varepsilon)$. We need to find an expression for this derivative.

For each $i\in I$, let $u_i + l_i s$ be the line in $\mathbb{R}^n$ containing $E_i$; $u_i$ is a point on the line, $l_i$ is a unit vector parallel to the line, and $s$ is the parameter. Clearly, $\lbrace t\xi+\xi^\perp\rbrace$ intersects the same edges and facets as $\xi_0^\perp$ for every $\xi\in S^{n-1}(\xi_0,\varepsilon)$ and $|t|\leq\varepsilon$. The intersection point of $\lbrace t\xi+\xi^\perp\rbrace$ with the edge $E_i$ is given by  
\begin{align*}
p_i(\xi,t) := u_i + l_i \left( \frac{ t-\langle u_i,\xi\rangle}{\langle l_i,\xi\rangle} \right) .
\end{align*}
Note that $\xi_0^\perp$ intersects exactly those edges of $G_0$ which are adjacent to the vertex $v$. Whenever $E_i$ is an edge of $G_0$ adjacent to $v$, we put $u_i := v$ and choose $l_i$ so that it gives the direction from another vertex of $G_0$ to $v$; this ensures $\langle l_i,\xi_0\rangle >0$.

For each $j\in J$, there is a pair of vertices from the facet $F_j$ such that the line through them does not lie in a translate of $\mbox{aff}(G_0)$, and with one of the vertices on either side of $\xi_0^\perp$. Translating this line if necessary, we obtain an auxiliary line $w_j+m_j s$ which 
\begin{itemize}
\item lies within $\mbox{aff}(F_j)$ and intersects the relative interior of $F_j$;
\item is transversal to $\xi^\perp$ for every $\xi\in S^{n-1}(\xi_0,\varepsilon)$;
\item does not lie within an $(n-2)$-dimensional affine subspace parallel to $\mbox{aff}(G_0)$. 
\end{itemize}
Again, $w_j$ is a point on the line, $m_j$ is a unit vector parallel to the line, and $s$ is the parameter. The intersection point of $\lbrace t\xi+\xi^\perp\rbrace$ with $w_j+m_j s$ is given by 
\begin{align*}
q_j(\xi,t) := w_j + m_j \left( \frac{ t-\langle w_j,\xi\rangle}{\langle m_j,\xi\rangle} \right) .
\end{align*}
Note that we necessarily have $\xi_0\not\perp m_j$ for all $j\in J$.

Consider a facet $F_j$, and an $(n-2)$-dimensional face $G$ of $P$ which intersects $\xi_0^\perp$ and is adjacent to $F_j$. Observe that $G\cap\lbrace t\xi + \xi^\perp\rbrace$ is an $(n-3)$-dimensional face of the $(n-2)$-dimensional polytope $F_j\cap\lbrace t\xi + \xi^\perp\rbrace$, for each $\xi\in S^{n-1}(\xi_0,\varepsilon)$ and $|t|\leq\varepsilon$. Express $G\cap\lbrace t\xi + \xi^\perp\rbrace$ as a disjoint union of $(n-3)$-dimensional simplices whose vertices correspond to the vertices of $G\cap\lbrace t\xi + \xi^\perp\rbrace$; that is, each simplex has vertices $p_{i_1}(\xi,t),\ldots, p_{i_{n-2}}(\xi,t)$ for some $i_1,\ldots, i_{n-2}\in I$. Triangulating every such $(n-3)$-dimensional face $G\cap\lbrace t\xi + \xi^\perp\rbrace$ in this way, we get a triangulation of $F_j\cap\lbrace t\xi + \xi^\perp\rbrace$ by taking the convex hull of the simplices in its relative boundary with $q_j(\xi,t)$.

\begin{rem*}
The description and orientation of a simplex $\Delta$ in the triangulation of $F_j\cap\lbrace t\xi + \xi^\perp\rbrace$ in terms of the ordered vertices $\lbrace p_{i_1}(\xi,t),\ldots, p_{i_{n-2}}(\xi,t), q_j(\xi,t)\rbrace$ is independent of $\xi\in S(\xi_0,\varepsilon)$ and $|t|\leq\varepsilon$. 
\end{rem*}

Setting $n_j\in S^{n-1}$ to be the outer unit normal to $F_j$, $\mbox{vol}_{n-2} \big( F_j\cap\lbrace t\xi + \xi^\perp\rbrace \big)$ is then a sum of terms of the form 
\begin{align}\label{simplex_volume}
\mbox{vol}_{n-2}(\Delta) 
	= \frac{ \det\Big( p_{i_1}(\xi,t) - q_j(\xi,t),\ldots, p_{i_{n-2}}(\xi,t) - q_j(\xi,t), n_j,\xi \Big) }
		{ (n-2)! \sqrt{1 - \langle n_j,\xi\rangle^2 } } ;
\end{align}
see page 14 in \cite{Ga}, for example, for the volume formula for a simplex. We assume the column vectors in the determinant are ordered so that the determinant is positive. Differentiating (\ref{simplex_volume}) at $t=0$ with the help of Lemma \ref{determinant derivatives} gives
\begin{align}\label{simplex_deriv}
\frac{1}{ (n-2)! \sqrt{1 - \langle n_j,\xi\rangle^2 } }
	\sum_{\gamma=1}^{n-2} \det\Big( X_{i_1}(\xi), \ldots, \widetilde{X}_{i_\gamma}(\xi), \ldots, X_{i_{n-2}}(\xi), n_j,\xi \Big) ,
\end{align} 
where 
\begin{align*}
X_{i_\gamma}(\xi) &:= p_{i_\gamma}(\xi,0) - q_j(\xi,0) \\
	&\ = u_{i_\gamma} - \left(\frac{\langle u_{i_\gamma}, \xi \rangle}{ \langle l_{i_\gamma}, \xi \rangle }\right) l_{i_\gamma} 
	- w_j + \left(\frac{\langle w_j, \xi \rangle}{ \langle m_j, \xi \rangle }\right) m_j , \\
\widetilde{X}_{i_\gamma}(\xi) &:= \frac{d}{dt} \Big( p_{i_\gamma}(\xi,t) - q_j(\xi,t) \Big) \bigg|_{t=0}
	= \frac{ l_{i_\gamma} }{ \langle l_{i_\gamma}, \xi \rangle } - \frac{ m_j }{ \langle m_j, \xi \rangle } .
\end{align*}

The left hand side of equation (\ref{equality}) is a sum of expressions having the form (\ref{simplex_deriv}). That is, (\ref{equality}) is equivalent to 
\begin{align}\label{equality2}
\sum_{\Delta} \left( \frac{ 
	\sum_{\gamma=1}^{n-2} \det\Big( X_{i_1}(\xi), \ldots, \widetilde{X}_{i_\gamma}(\xi), \ldots, X_{i_{n-2}}(\xi), n_j,\xi \Big) } 
	{ (n-2)! \sqrt{1 - \langle n_j,\xi\rangle^2 } } \right)
	= 0 ,
\end{align}
where the first summation is over all appropriately ordered indices $\lbrace i_1,\ldots,i_{n-2},j\rbrace$ corresponding to vertices of simplices $\Delta$ in our triangulation of $P\cap\xi_0^\perp$. Forget the geometric meaning of equation (\ref{equality2}). Clearing denominators on the left side of equation (\ref{equality2}) gives a function of $\xi$ which we denote by $\Phi(\xi)$. Because $\Phi$ is a sum of products of scalar products of $\xi$ and terms $\sqrt{1-\langle n_j,\xi\rangle^2}$, we are able to consider $\Phi$ as a function on all of $S^{n-1}$ such that $\Phi\equiv 0$ on $S^{n-1}(\xi_0,\varepsilon)$.

\begin{lem}\label{zero_lemma}
$\Phi(\xi)= 0$ for all $\xi\in S^{n-1}$. 
\end{lem}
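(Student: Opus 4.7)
The plan is to invoke the identity theorem for real-analytic functions on a connected real-analytic manifold. After clearing denominators in (\ref{equality2}), the function $\Phi$ is a finite sum of products of (i) polynomial expressions in the coordinates of $\xi$ (arising from the scalar products $\langle u_i,\xi\rangle$, $\langle l_i,\xi\rangle$, $\langle w_j,\xi\rangle$, $\langle m_j,\xi\rangle$, and the entries of the determinants), and (ii) square-root factors of the form $\sqrt{1-\langle n_j,\xi\rangle^2}$ attached to the outer unit normals $n_j$ of the relevant facets. In particular, $\Phi$ is a continuous function on the whole sphere $S^{n-1}$, and vanishes identically on $S^{n-1}(\xi_0,\varepsilon)$ by construction.

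Define the open subset
\[
U := S^{n-1} \setminus \bigcup_{j\in J}\{\pm n_j\}.
\]
On $U$, each quantity $1-\langle n_j,\xi\rangle^2$ is a strictly positive real-analytic function, so its square root is real-analytic (via the power series for $\sqrt{1+x}$). Consequently every term of $\Phi$ is real-analytic on $U$, and hence so is $\Phi$. Because $n\geq 3$, the sphere $S^{n-1}$ has dimension at least two, so removing the finite set $\{\pm n_j:j\in J\}$ leaves $U$ connected. Shrinking $\varepsilon$ if necessary (and using Lemma \ref{avoiding vertices} to adjust $\xi_0$ away from the excised points, should any coincide), we may arrange $S^{n-1}(\xi_0,\varepsilon)\subset U$. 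Since $\Phi$ is a real-analytic function on the connected real-analytic manifold $U$ which vanishes on the nonempty open subset $S^{n-1}(\xi_0,\varepsilon)$, the identity theorem forces $\Phi\equiv 0$ throughout $U$.

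Finally, each factor $\sqrt{1-\langle n_j,\xi\rangle^2}$ extends continuously to $\pm n_j$ (taking value $0$ there), so $\Phi$ is continuous on all of $S^{n-1}$. Combined with the vanishing on the dense subset $U$, this yields $\Phi\equiv 0$ on $S^{n-1}$. The only point requiring any care is the verification that the square-root factors are genuinely real-analytic on $U$, which is precisely why the antipodal pairs $\pm n_j$ must be excised; this step is routine, but it is essential that the excluded set is small enough (finitely many points) and that $n\geq 3$, so that $U$ remains connected and the identity theorem applies globally.
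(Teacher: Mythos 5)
Your proof is correct, and it rests on the same underlying mechanism as the paper's --- real-analyticity of $\Phi$ away from the zeros of the radicals, plus unique continuation from the cap $S^{n-1}(\xi_0,\varepsilon)$ --- but the execution differs. You work directly on the punctured sphere $U=S^{n-1}\setminus\bigcup_{j\in J}\{\pm n_j\}$ and invoke the identity theorem for real-analytic functions on a connected real-analytic manifold, then recover the remaining finitely many points by continuity. The paper instead avoids the several-variables identity theorem altogether: assuming $\Phi(\zeta)\neq 0$, it joins $\zeta$ to a point of the cap by a geodesic missing all $\pm n_j$, parametrizes that geodesic by an angle $\phi$, and observes that the restricted function is analytic in the single variable $\phi$ and vanishes near $\phi=0$, a contradiction. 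Your route is cleaner and proves the literal statement $\Phi\equiv 0$ on all of $S^{n-1}$ (the paper's argument, as written, directly gives vanishing only off $\{\pm n_j\}$, which is all that is used later); the paper's route requires only one-dimensional analyticity and so sidesteps any discussion of analyticity on manifolds. One cosmetic remark: there is no need to ``adjust $\xi_0$'' (and Lemma \ref{avoiding vertices} is not the tool for that anyway) --- since the excised set is finite, $S^{n-1}(\xi_0,\varepsilon)\cap U$ is automatically a nonempty open subset of $U$ on which $\Phi$ vanishes, which is all the identity theorem requires; in fact $\xi_0\neq\pm n_j$ holds automatically, since otherwise $\xi_0^\perp$ would be parallel to the facet $F_j$ it intersects and would then contain vertices of $P$.
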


\begin{proof}
Suppose $\zeta\in S^{n-1}$ is such that $\Phi(\zeta)\neq 0$. We have $\Phi(n_j) = 0$ for all $j\in J$, so $\zeta\neq n_j$. There is a $\zeta_1$ from the relative interior of $S^{n-1}(\xi_0,\varepsilon)$ which is not parallel to $\zeta$, and is such that the geodesic $[\zeta_1,\zeta]$ connecting $\zeta_1$ to $\zeta$ contains none of the $n_j$. Choose $\zeta_2\in S^{n-1}$ which is perpendicular to $\zeta_1$, lies in $\mbox{aff}\big( [\zeta_1,\zeta]\big)$, and is such that $\langle \zeta_2,\zeta\rangle > 0$. Let $\widetilde{\Phi}$ be the restriction of $\Phi$ to $\xi\in [\zeta_1,\zeta]$, and adopt polar coordinates $\xi = \zeta_1 \cos(\phi) + \zeta_2\sin(\phi)$. As a function of $\phi\in \big[0,\arccos(\langle\zeta_1,\zeta\rangle)\big]$, $\widetilde{\Phi}$ is a sum of products of $\cos(\phi),\, \sin(\phi)$, and $\sqrt{ 1 - \langle n_j, \zeta_1 \cos(\phi) + \zeta_2\sin(\phi)\rangle^2}$. The radicals in the expression for $\widetilde{\Phi}$ are never zero because $[\zeta_1,\zeta]$ misses all of the $n_j$, so $\widetilde{\Phi}$ is analytic. Consequently, $\widetilde{\Phi}$ must be identically zero, as it vanishes in a neighbourhood of $\phi = 0$. This is a contradiction.
\end{proof}

Lemma \ref{zero_lemma} implies that the equality in (\ref{equality2}) holds for all $\xi\in S^{n-1}\backslash A$, where $A$ is the union over $i\in I$ and $j\in J$ of the unit spheres in $l_i^\perp$ and $m_j^\perp$. Of course, we have $\pm n_j\in S^{n-1}\cap m_j^\perp$ for each $j\in J$, so $\lbrace \pm n_j\rbrace_{j\in J}\subset A$. We will consider the limit of the left side of equation (\ref{equality2}) along a certain path in $S^{n-1}\backslash A$ which terminates at a point in $A$.

The $(n-2)$-dimensional face $G_0$ is the intersection of two facets of $P$ belonging to $\lbrace F_j\rbrace_{j\in J}$, say $F_1$ and $F_2$. The normal space of $G_0$ is two dimensional and spanned by $n_1$ and $n_2$, the outer unit normals of $F_1$ and $F_2$. Consider the non-degenerate geodesic 
\begin{align*}
[n_1,n_2] := \left\lbrace \widetilde{n}_s := \frac{ (1-s) n_1 + s n_2 }{ \big| (1-s) n_1 + s n_2 \big|_2 } \,
	\Bigg| \, 0\leq s \leq 1 \right\rbrace 
\end{align*}
in $S^{n-1}\cap G_0^\perp \cong S^1$. This arc is not contained in the normal space of any other $(n-2)$-dimensional face of $P$ intersected by $\xi_0^\perp$, because $\xi_0^\perp$ does not intersect any other $(n-2)$-dimensional faces parallel to $G_0$; nor is $[n_1,n_2]$ contained in $m_j^\perp$ for any $j\in J$, because $m_j$ is not contained in a translate of $\mbox{aff}(G_0)$. Therefore, we can fix $0<s_0<1$ so that
\begin{itemize}
\item $\widetilde{n}:=\widetilde{n}_{s_0}$ is not a unit normal for any $(n-2)$-dimensional face of $P$ intersected by $\xi_0^\perp$, besides $G_0$;
\item $\widetilde{n}\not\perp m_j$, hence $\widetilde{n} \neq \pm n_j$, for all $j\in J$.
\end{itemize}
We additionally select $s_0$ so that it is not among the finitely many roots of the function 
\begin{align}\label{s0}
(0,1)\ni s \mapsto \frac{ -s }{ \sqrt{ 1 - \langle n_1, \widetilde{n}_s\rangle^2 } }
	+ \frac{ 1-s }{ \sqrt{ 1 - \langle n_2, \widetilde{n}_s\rangle^2 } } .
\end{align}
Observe that $\langle v, \widetilde{n} \rangle >0$ because $\langle v,n_1\rangle > 0$ and $\langle v,n_2\rangle > 0$.

For $\delta >0$, define the unit vector
\begin{align*}
\xi_\delta := \frac{ \widetilde{n} + \delta \xi_0 }{ | \widetilde{n} + \delta \xi_0 |_2 } .
\end{align*}
Clearly, 
\begin{align*}
\lim_{\delta\rightarrow 0^+} \xi_\delta = \widetilde{n}\in S^{n-1}\cap G_0^\perp 
	\subset \bigcup_{i\in I} S^{n-1}\cap l_i^\perp \subset A . 
\end{align*}
We have $\langle \xi_\delta, l_i\rangle , \langle \xi_\delta, m_j\rangle \neq 0$ for all $i\in I, j\in J$ whenever 
\begin{align*}
0 < \delta < \min \left\lbrace \frac{ |\langle \widetilde{n}, l_i\rangle| }{ |\langle \xi_0, l_i\rangle| } , \
	\frac{ |\langle \widetilde{n}, m_j\rangle| }{ | \langle \xi_0, m_j\rangle | } \ 
	\Bigg| \ i\in I \mbox{ such that } \langle \widetilde{n}, l_i\rangle \neq 0, \ j\in J \right\rbrace .
\end{align*}
The previous minimum is well-defined and positive, because $\xi_0\not\perp l_i,m_j$ and $\widetilde{n}\not\perp m_j$ for all $i\in I$, $j\in J$. So $\xi_\delta \in S^{n-1}\backslash A$ for small enough $\delta >0$.

Now, replace $\xi$ with $\xi_\delta$ in (\ref{equality2}), multiply both sides of the resulting equation by $\delta^{n-2}$, and take the limit as $\delta$ goes to zero. Consider what happens to the expressions (\ref{simplex_deriv}) multiplied by $\delta^{n-2}$ in this limit. We have 
\begin{align*}
\lim_{\delta\rightarrow 0^+} \delta X_{i_\gamma}(\xi_\delta) 
	&= \delta u_{i_\gamma} - \delta \left(\frac{\langle u_{i_\gamma}, \widetilde{n} + \delta \xi_0 \rangle}
	{ \langle l_{i_\gamma}, \widetilde{n} + \delta \xi_0 \rangle }\right) l_{i_\gamma} 
	- \delta w_j + \delta \left(\frac{\langle w_j, \widetilde{n} + \delta \xi_0 \rangle}
	{ \langle m_j, \widetilde{n} + \delta \xi_0 \rangle }\right) m_j \\
&= \begin{cases}
o \quad &\mbox{if} \quad \widetilde{n} \not\perp l_{i_\gamma} ; \\
- \frac{ \langle u_{i_\gamma}, \widetilde{n} \rangle }{ \left\langle l_{i_\gamma}, \xi_0 \right\rangle } l_{i_\gamma} 
	\quad &\mbox{if} \quad \widetilde{n} \perp l_{i_\gamma} ,
\end{cases}	
\end{align*}
and 
\begin{align*}
\lim_{\delta\rightarrow 0^+} \delta \widetilde{X}_{i_\gamma}(\xi_\delta) 
	&= \delta | \widetilde{n} + \delta \xi_0 |_2	
	\frac{ l_{i_\gamma} }{ \langle l_{i_\gamma}, \widetilde{n} + \delta \xi_0 \rangle } 
	- \delta | \widetilde{n} + \delta \xi_0 |_2
	\frac{ m_j }{ \langle m_j, \widetilde{n} + \delta \xi_0 \rangle } \\
&= \begin{cases}
o \quad &\mbox{if} \quad \widetilde{n} \not\perp l_{i_\gamma} ; \\
\frac{ l_{i_\gamma} }{ \left\langle l_{i_\gamma}, \xi_0 \right\rangle } \quad &\mbox{if} \quad \widetilde{n} \perp l_{i_\gamma} ,
\end{cases}
\end{align*}
because $\xi_0\not\perp l_i$ for all $i\in I$ and $\widetilde{n}\not\perp m_j$ for all $j\in J$. Therefore, expression (\ref{simplex_deriv}) vanishes in the limit if at least one index $i_\gamma$ in (\ref{simplex_deriv}) corresponds to an edge direction $l_{i_\gamma}$ which is not perpendicular to $\widetilde{n}$. If $l_{i_1}, \ldots, l_{i_{n-2}}$ are all perpendicular to $\widetilde{n}$, then expression (\ref{simplex_deriv}) becomes 
\begin{align}\label{simplex deriv limit}
\frac{ (-1)^{n-3} \det\Big( l_{i_1}, \ldots , l_{i_{n-2}}, n_j, \widetilde{n} \Big) }
	{ (n-2)! \sqrt{ 1 - \langle n_j,\widetilde{n}\rangle^2 } } 
	\sum_{\omega =1}^{n-2} \left( \langle u_{i_\omega}, \widetilde{n}\rangle^{-1}
	\prod_{\gamma = 1}^{n-2} \frac{ \langle u_{i_\gamma}, \widetilde{n} \rangle }
	{ \langle l_{i_\gamma}, \xi_0 \rangle } \right) .
\end{align}
If the determinant in (\ref{simplex deriv limit}) is non-zero, then $l_{i_1},\ldots, l_{i_{n-2}}$ are linearly independent. Therefore, $l_{i_1},\ldots, l_{i_{n-2}}$ span an $(n-2)$-dimensional plane which is parallel to the $(n-2)$-dimensional face $G$ of $P$ to which the edges $E_{i_1},\ldots, E_{i_{n-2}}$ belong. Necessarily, $\widetilde{n}$ will be a unit normal for $G$, so $G = G_0$ by our choice of $\widetilde{n}$. We conclude that the limit of (\ref{simplex_deriv}) only has a chance of being non-zero if (\ref{simplex_deriv}) corresponds to an $(n-2)$-dimensional simplex $\Delta_j$ in our triangulation of $F_j\cap\xi_0^\perp$, $j=1$ or $j=2$, with the base of $\Delta_j$ being an $(n-3)$-dimensional simplex in the triangulation of $G_0\cap\xi_0^\perp$.

If (\ref{simplex_deriv}) comes from such a $\Delta_1$ in the triangulation of $F_1\cap\xi_0^\perp$, then its limit is given by (\ref{simplex deriv limit}), and simplifies further to the non-zero term
\begin{align}\label{NonZero1}
\frac{ (-1)^{n-3} \langle v,\widetilde{n}\rangle^{n-3} s_0 \det\Big( l_{i_1}, \ldots , l_{i_{n-2}}, n_1, n_2 \Big) }
	{ (n-3)! \langle l_{i_1},\xi_0\rangle \times\cdots\times \langle l_{i_{n-2}},\xi_0\rangle 
	\big| (1-s_0)n_1 + s_0 n_2 \big|_2 \sqrt{ 1 - \langle n_1,\widetilde{n}\rangle^2 } } \neq 0 .
\end{align}
The distinct indices $i_1,\ldots , i_{n-2}$ correspond to the vertices of a simplex in the triangulation of $G_0\cap\xi_0^\perp$, ordered so that the expression in (\ref{simplex_volume}) for the facet $F_1$ is positive. The important fact that (\ref{NonZero1}) is non-zero is clear once we observe that the determinant is non-zero. Indeed, the unit vectors $l_{i_1},\ldots, l_{i_{n-2}}$ are necessarily linearly independent and perpendicular to both $n_1$ and $n_2$, because they give the directions for distinct edges of $G_0$ with the common vertex $v$. Similarly, when (\ref{simplex_deriv}) comes from such a $\Delta_2$ in the triangulation of $F_2\cap\xi_0^\perp$, it has the non-zero limit  
\begin{align}\label{NonZero2}
\frac{ (-1)^{n-3} \langle v,\widetilde{n}\rangle^{n-3} (1-s_0) \det\Big( l_{k_1}, \ldots , l_{k_{n-2}}, n_2, n_1 \Big) }
	{ (n-3)! \langle l_{k_1},\xi_0\rangle \times\cdots\times \langle l_{k_{n-2}},\xi_0\rangle 
	\big| (1-s_0)n_1 + s_0 n_2 \big|_2 \sqrt{ 1 - \langle n_2,\widetilde{n}\rangle^2 } } \neq 0.
\end{align}
The distinct indices $k_1,\ldots , k_{n-2}$ correspond to the vertices of a simplex in the triangulation of $G_0\cap\xi_0^\perp$, ordered so that the expression in (\ref{simplex_volume}) for the facet $F_2$ is positive.

We will now consider the signs of the determinants in (\ref{NonZero1}) and ({\ref{NonZero2}).

\begin{lem}\label{determinants}
The determinants $\det \big( l_{i_1}, \ldots, l_{i_{n-2}}, n_1, n_2 \big)$ in (\ref{NonZero1}) have the same sign for any collection of indices $i_1,\ldots , i_{n-2}$ with the previously described properties. The determinants in (\ref{NonZero2}) also all have the same sign. However, the signs of the determinants in (\ref{NonZero1}) and (\ref{NonZero2}) may differ. 
\end{lem}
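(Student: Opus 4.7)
\textbf{Proof plan for Lemma \ref{determinants}.} The idea is to exploit the block-diagonal structure of both determinants and reduce the claim to the consistency of orientations on the triangulation of the $(n-3)$-dimensional polytope $G_0 \cap \xi_0^\perp$. Since each $l_{i_\gamma}$ is parallel to an edge of $G_0 \subseteq F_1 \cap F_2$, we have $l_{i_\gamma} \in V := \{n_1, n_2\}^\perp$, while $n_1, n_2 \in V^\perp$. In an orthonormal basis adapted to $\mathbb{R}^n = V \oplus V^\perp$ the matrix $[l_{i_1}, \ldots, l_{i_{n-2}}, n_1, n_2]$ is block-diagonal, giving
\[
\det\bigl( l_{i_1}, \ldots, l_{i_{n-2}}, n_1, n_2\bigr) = \det\nolimits_V(l_{i_1}, \ldots, l_{i_{n-2}}) \cdot \det\nolimits_{V^\perp}(n_1, n_2),
\]
with the second factor a fixed nonzero number independent of $(i_1, \ldots, i_{n-2})$. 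Using $p_{i_\gamma}(\xi_0,0) - v = -c_\gamma l_{i_\gamma}$, with $c_\gamma := \langle v,\xi_0\rangle / \langle l_{i_\gamma},\xi_0\rangle > 0$, the sign of $\det_V(l_{i_1}, \ldots, l_{i_{n-2}})$ equals $(-1)^{n-2}$ times the sign of the signed $(n-2)$-volume of the simplex $(v, p_{i_1}, \ldots, p_{i_{n-2}})$ in the oriented space $V$.

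An identical block decomposition applies to the simplex-volume determinant in (\ref{simplex_volume}): each $p_{i_\gamma} - q_1$ lies in $W_1 := \{n_1\}^\perp \cap \xi_0^\perp$ and $n_1, \xi_0 \in W_1^\perp$, so
\[
\det\bigl(p_{i_1} - q_1, \ldots, p_{i_{n-2}} - q_1, n_1, \xi_0\bigr) = \det\nolimits_{W_1}\bigl(p_{i_1} - q_1, \ldots, p_{i_{n-2}} - q_1\bigr) \cdot \det\nolimits_{W_1^\perp}(n_1, \xi_0).
\]
The second factor is a fixed nonzero number, so the convention that the full determinant be positive pins down the sign of the signed $(n-2)$-volume of $(q_1, p_{i_1}, \ldots, p_{i_{n-2}})$ in the oriented space $W_1$ to a single value across all simplices in the triangulation.

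The key observation is that both $(v, p_{i_1}, \ldots, p_{i_{n-2}})$ and $(q_1, p_{i_1}, \ldots, p_{i_{n-2}})$ are cones over the common $(n-3)$-dimensional base simplex $(p_{i_1}, \ldots, p_{i_{n-2}})$, which lies in $\xi_0^\perp \cap \mbox{aff}(G_0)$ with direction space $W := V \cap W_1$. For any non-degenerate simplex in the triangulation, the affine hull of its base is all of $\xi_0^\perp \cap \mbox{aff}(G_0)$; hence each signed volume factors as (a fixed signed height of the apex $v$ or $q_1$ above $\xi_0^\perp \cap \mbox{aff}(G_0)$, independent of the simplex) times (the signed $(n-3)$-volume of the base in $W$, computed with the orientation of $W$ induced from $V$ or from $W_1$, respectively). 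These two induced orientations of $W$ differ by at most a global sign, so the sign of $\det_V(p_{i_1} - v, \ldots, p_{i_{n-2}} - v)$ equals a fixed global sign times the sign of $\det_{W_1}(p_{i_1} - q_1, \ldots, p_{i_{n-2}} - q_1)$; the latter is constant by our ordering convention, so all determinants in (\ref{NonZero1}) share the same sign. The identical argument with $F_2, n_2, q_2$ in place of $F_1, n_1, q_1$ handles (\ref{NonZero2}). The two collections can genuinely differ in sign because $(n_1, n_2)$ is replaced by $(n_2, n_1)$ in the last two columns and because the orientation of $W$ induced from $W_2$ need not coincide with that induced from $W_1$.

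The main obstacle is the orientation bookkeeping among $V$, $W_1$, $W$ (and later $W_2$); to make it rigorous I would work in an orthonormal basis of $\mathbb{R}^n$ whose first $n-3$ vectors span $W$, so that each block determinant and each orientation comparison reduces to a short explicit computation inside the two-dimensional complement of $W$ in $\{n_1\}^\perp$.
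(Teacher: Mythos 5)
Your plan is correct, and it takes a genuinely different route from the paper. The paper proves Lemma \ref{determinants} by building an explicit special orthogonal map $T$ (fixing $\mbox{span}(n_1,L)$ and rotating $n_2|n_1^\perp$ onto the direction of $\xi_0|n_1^\perp$), composing with a possible reflection $\widetilde{T}$ so that the image $u$ of $v$ lands on the same side of $\mbox{aff}(G_0\cap\xi_0^\perp)$ as $q_1(\xi_0,0)$, rewriting $\det(l_{i_1},\ldots,l_{i_{n-2}},n_1,n_2)$ as a fixed-sign multiple of $\det\big(p_{i_1}(\xi_0,0)-u,\ldots,p_{i_{n-2}}(\xi_0,0)-u,n_1,\xi_0\big)$, and then sliding the apex from $u$ to $q_1(\xi_0,0)$ along a segment inside $F_1\cap\xi_0^\perp$ that avoids $G_0\cap\xi_0^\perp$, invoking the Intermediate Value Theorem to match the sign of the (positive, by convention) determinant in (\ref{simplex_volume}). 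Your block-diagonal factorizations with respect to $V=\lbrace n_1,n_2\rbrace^\perp$ and $W_1=\lbrace n_1,\xi_0\rbrace^\perp$, combined with the observation that both determinants are cone volumes over the same $(n-3)$-dimensional base in $W=V\cap W_1$ with simplex-independent apex heights ($\langle y_0-v,f_V\rangle$ and $\langle y_0-q_1,f_{W_1}\rangle$ for any fixed $y_0\in\mbox{aff}(G_0\cap\xi_0^\perp)$, both nonzero), replace the rotation-plus-IVT step by pure linear algebra; the only simplex-dependent factor is $\det_W$ of the common base, which cancels in the comparison. Your reduction $p_{i_\gamma}(\xi_0,0)-v=-c_\gamma l_{i_\gamma}$ correctly uses that all $c_\gamma=\langle v,\xi_0\rangle/\langle l_{i_\gamma},\xi_0\rangle$ are positive (so the product $\prod c_\gamma^{-1}$ cannot flip signs between simplices), which relies on the earlier normalizations $\langle v,\xi_0\rangle>0$ and $\langle l_{i_\gamma},\xi_0\rangle>0$; keep that dependence explicit when you write it up. The remaining work you flag (fixing an orthonormal basis adapted to $W\subset V$ and $W\subset W_1$ and expanding along the complementary row) is routine, so the plan is complete. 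What your approach buys is a shorter, coordinate-free-in-spirit argument that avoids constructing $T$ and $\widetilde{T}$; what the paper's approach buys is a more geometric picture (the apex is physically moved to $q_1$ without crossing the base's affine hull), at the cost of more bookkeeping.
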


\begin{proof}
Let $y\in G_0\cap\xi_0^\perp$. Consider the $(n-3)$ - dimensional subspace $L := \mbox{span}(G_0\cap\xi_0^\perp - y)$, which is orthogonal to $\mbox{span}( n_1, n_2, \xi_0)$, and the $(n-2)$ - dimensional subspace $\widetilde{L} = \mbox{span}( n_1, L)$. The projections $n_2|n_1^\perp$ and $\xi_0|n_1^\perp$ are non-zero and orthogonal to $\widetilde{L}$. Let $T:\mathbb{R}^n\rightarrow\mathbb{R}^n$ be the special orthogonal matrix which leaves $\widetilde{L}$ fixed, and rotates $n_2|n_1^\perp$ through the two - dimensional plane $\mbox{span}( n_2|n_1^\perp, \xi_0|n_1^\perp)$ to a vector parallel to, and with the same direction as, $\xi_0|n_1^\perp$. We have $\big( n_2|n_1^\perp \big) \perp (v-y)$, because $n_1, n_2 \perp (v-y)$. Since orthogonal transformations preserve inner products, 
\begin{align*}
\big\langle \xi_0|n_1^\perp, T(v-y)\big\rangle 
	&= \big| \xi_0|n_1^\perp \big|_2 \big| T(n_2|n_1^\perp) \big|_2^{-1} \big\langle T( n_2|n_1^\perp), T(v-y) \big\rangle \\
&= \big| \xi_0|n_1^\perp \big|_2 \big| T(n_2|n_1^\perp) \big|_2^{-1} \big\langle n_2|n_1^\perp, v-y \big\rangle \\
&= 0,
\end{align*}
and 
\begin{align*}
\big\langle n_1, T(v-y)\big\rangle = \big\langle T(n_1), T(v-y) \big\rangle = \langle n_1, v-y\rangle = 0 .
\end{align*}
We have $(\xi_0|n_1^\perp)^\perp \cap n_1^\perp = \mbox{span}\big( F_1\cap\xi_0^\perp - y \big)$, because $n_1,\xi_0 \perp \mbox{span}\big( F_1\cap\xi_0^\perp -y \big)$. Therefore, $T$ maps $v-y$ into $\mbox{span}\big( F_1\cap \xi_0^\perp - y \big)$, which also contains $q_1(\xi_0,0)-y$.

The subspace $L$ splits $\mbox{span}\big( F_1\cap \xi_0^\perp - y \big)$ into two halves. If $T(v-y)$ and $q_1(\xi_0,0)-y$ lie in the same half, let $\widetilde{T}:\mathbb{R}^n\rightarrow \mathbb{R}^n$ be the identity. If $T(v-y)$ and $q_1(\xi_0,0)-y$ lie in opposite halves, let $\widetilde{T}$ be the orthogonal transformation which leaves $L$ and $\mbox{span}\big( F_1\cap \xi_0^\perp - y \big)^\perp$ fixed, and reflects $T(v-y)$ across $L$. In either case, set $u := \widetilde{T}T(v-y)+y \in\mbox{aff}(F_1\cap\xi_0^\perp)$. We have that $u$ and $q_1(\xi_0,0)$ lie on the same side of $\mbox{aff}(G_0\cap\xi_0)$ in $\mbox{aff}(F_1\cap\xi_0^\perp)$. Also, $\widetilde{T}n_1 = n_1$ and $\widetilde{T}(\xi_0|n_1^\perp) = \xi_0|n_1^\perp$.

For any indices $i_1,\ldots, i_{n-2}$ from (\ref{NonZero1}), we find that
\begin{align}\label{determinant sign1}
&\det \Big( l_{i_1}, \ldots, l_{i_{n-2}}, n_1, n_2 \Big) \nonumber \\
&= \det \left( \frac{ v - p_{i_1}(\xi_0,0) }{ |v - p_{i_1}(\xi_0,0)|_2 }, 
	\ldots, \frac{ v - p_{i_{n-2}} }{ |v - p_{i_{n-2}}(\xi_0,0)|_2 }, n_1, n_2 - \langle n_1,n_2\rangle n_1 \right) \nonumber \\
&= C \det \Big( \widetilde{T}T(v-y) - \widetilde{T}T(p_{i_1}(\xi_0,0)-y), \ldots \nonumber \\
&\hspace{3.5cm} \ldots, \widetilde{T}T(v-y) - \widetilde{T}T(p_{i_{n-2}}(\xi_0,0)-y), 
	\widetilde{T}Tn_1, \widetilde{T}T(n_2|n_1^\perp) \Big) \nonumber \\
&= \frac{ C (-1)^{n-2} \big| \xi_0|n_1^\perp \big|_2 }{ \big| T(n_2|n_1^\perp) \big|_2 } 
	\det \Big( p_{i_1}(\xi_0,0) - u, \ldots, p_{i_{n-2}}(\xi_0,0) - u, n_1, \xi_0|n_1^\perp \Big) \nonumber \\
&= \frac{ C (-1)^{n-2} \big| \xi_0|n_1^\perp \big|_2 }{ \big| T(n_2|n_1^\perp) \big|_2 } 
	\det \Big( p_{i_1}(\xi_0,0) - u, \ldots, p_{i_{n-2}}(\xi_0,0) - u, n_1, \xi_0 \Big) ,
\end{align}
where 
\begin{align*}
C = \pm \left( \prod_{\gamma = 1}^{n-2} |v - p_{i_\gamma}(\xi_0,0)|_2 \right)^{-1} .
\end{align*}
The sign of $C$ depends on the definition of $\widetilde{T}$. Importantly, the sign of 
\begin{align*}
\frac{ C (-1)^{n-2} \big| \xi_0|n_1^\perp \big|_2 }{ \big| T(n_2|n_1^\perp) \big|_2 }  
\end{align*}
is independent of any particular choice of appropriate indices in (\ref{NonZero1}). The function  
\begin{align*}
t \mapsto &\det \Big( p_{i_1}(\xi_0,0) - \big( (1-t)u + t q_1(\xi_0,0) \big), \ldots \\
&\hspace{3cm} \ldots, p_{i_{n-2}}(\xi_0,0) - \big( (1-t)u + t q_1(\xi_0,0) \big), n_1, \xi_0 \Big)
\end{align*}
is continuous for $t\in [0,1]$; it is also non-vanishing for such $t$ because the line segment connecting $u$ to $q_1(\xi_0,0)$ lies in $F_1\cap\xi_0^\perp$ and does not intersect $G_0\cap\xi_0^\perp$. By the Intermediate Value Theorem, the determinant in (\ref{determinant sign1}) must have the same sign as 
\begin{align*}
\det \Big( p_{i_1}(\xi_0,0) - q_1(\xi_0,0), \ldots, p_{i_{n-2}}(\xi_0,0) - q_1(\xi_0,0), n_1, \xi_0 \Big) .
\end{align*}
Recalling formula (\ref{simplex_volume}), we recognize that the previous determinant is positive. We conclude that the sign of $\det \big( l_{i_1}, \ldots, l_{i_{n-2}}, n_1, n_2 \big)$ is independent of the choice of appropriate indices in (\ref{NonZero1}).

A similar argument shows that the sign of the determinant in (\ref{NonZero2}) is also independent of the choice of appropriate indices $k_1, \ldots, k_{n-2}$. 
\end{proof}

In view of Lemma \ref{determinants} and the expressions (\ref{NonZero1}) and (\ref{NonZero2}), we see that 
\begin{align}\label{limit}
&\lim_{\delta\rightarrow 0^+} \delta^{n-2} \sum_{\Delta} \left( \frac{ 
	\sum_{\gamma=1}^{n-2} \det\Big( X_{i_1}(\xi_\delta), \ldots, \widetilde{X}_{i_\gamma}(\xi_\delta), \ldots, X_{i_{n-2}}(\xi_\delta), 
	n_j,\xi_\delta \Big) }
	{ (n-2)! \sqrt{1 - \langle n_j,\xi_\delta\rangle^2 } } \right) 
	\nonumber \\ 
&= \sum \Bigg( \frac{ (-1)^{n-3} \langle v,\widetilde{n}\rangle^{n-3} s_0 \det\Big( l_{i_1}, \ldots , l_{i_{n-2}}, n_1, n_2 \Big) }
	{ (n-3)! \langle l_{i_1},\xi_0\rangle \times\cdots\times \langle l_{i_{n-2}},\xi_0\rangle 
	\big| (1-s_0)n_1 + s_0 n_2 \big|_2 \sqrt{ 1 - \langle n_1,\widetilde{n}\rangle^2 } } \nonumber \\
&\hspace{1.25cm} + \frac{ (-1)^{n-3} \langle v,\widetilde{n}\rangle^{n-3} (1-s_0) \det\Big( l_{k_1}, \ldots , l_{k_{n-2}}, n_2, n_1 \Big) }
	{ (n-3)! \langle l_{k_1},\xi_0\rangle \times\cdots\times \langle l_{k_{n-2}},\xi_0\rangle 
	\big| (1-s_0)n_1 + s_0 n_2 \big|_2 \sqrt{ 1 - \langle n_2,\widetilde{n}\rangle^2 } } \Bigg) \nonumber \\
&= \frac{ (-1)^{n-3} \langle v,\widetilde{n}\rangle^{n-3} }{ (n-3)! \big| (1-s_0)n_1 + s_0 n_2 \big|_2 } 
	\left( \frac{s_0}{ \sqrt{ 1 - \langle n_1,\widetilde{n}\rangle^2 } } 
	\pm \frac{ 1-s_0  }{ \sqrt{ 1 - \langle n_2,\widetilde{n}\rangle^2 } } \right) \\
&\hspace{5cm} \cdot \sum \frac{ \det\Big( l_{i_1}, \ldots , l_{i_{n-2}}, n_1, n_2 \Big) }
	{ \langle l_{i_1},\xi_0\rangle \times\cdots\times \langle l_{i_{n-2}},\xi_0\rangle } . \nonumber
\end{align}
The third and fourth summations are taken over indices $i_1,\ldots, i_{n-2}\in I$ corresponding to the vertices $\lbrace p_{i_1}(\xi_0,0), \ldots, p_{i_{n-2}}(\xi_0,0)\rbrace $ of simplices in the triangulation of $G_0\cap\xi_0^\perp$, ordered so that the expression in (\ref{simplex_volume}) is positive for $F_1$. For each set of indices $i_1,\ldots, i_{n-2}$, $k_1,\ldots, k_{n-2}$ is a suitable rearrangement so that (\ref{simplex_volume}) is positive for $F_2$. The $\pm$ in (\ref{limit}) depends on whether or not the determinants 
\begin{align*}
\det\Big( l_{i_1}, \ldots , l_{i_{n-2}}, n_1, n_2 \Big)
\end{align*}
have the same sign as the determinants 
\begin{align*}
\det\Big( l_{k_1}, \ldots , l_{k_{n-2}}, n_2, n_1 \Big) . 
\end{align*}
We see that (\ref{limit}) is non-zero because $\langle v,\widetilde{n} \rangle >0$, $s_0$ is not a root of (\ref{s0}), $\langle l_i,\xi_0\rangle > 0$ for all edges $E_i$ of $G_0$ intersected by $\xi_0^\perp$, and by Lemma \ref{determinants}. The limit being non-zero contradicts the equality in (\ref{equality2}).

\subsection{Second Part}

Therefore, for every $(n-2)$-dimensional face $G$ of $P$, $\mbox{reflec}(G,t)$ is also an $(n-2)$-dimensional face of $P$ for some $t>0$. From this fact, we can immediately conclude the following:
\begin{itemize}
\item If $v$ is a vertex of $P$, then the line $\mathbb{R}v$ contains exactly one other vertex of $P$. This second vertex, which we will denote by $\widetilde{v}$, necessarily lies on the opposite side of the origin as $v$.
\item If $u$ and $v$ are vertices of $P$ connected by an edge $E(u,v)$, then $\widetilde{u}$ and $\widetilde{v}$ are connected by an edge $E(\widetilde{u},\widetilde{v})$ parallel to $E(u,v)$.
\end{itemize}
We prove $P=-P$ by showing $\widetilde{v} = -v$ for every vertex $v$.

To the contrary, suppose there is a vertex $v$ for which $|v|_2<|\widetilde{v}|_2$. Let $\lbrace v_i\rbrace_{i=0}^k$ be a sequence of vertices of $P$ such that $v_0 = v$, $v_k = \widetilde{v}$, and the vertices $v_i$ and $v_{i+1}$ are connected by an edge $E(v_i,v_{i+1})$ for each $0\leq i\leq k-1$. It follows from the previous itemized observations that the triangle $T(o,v,v_1)$ with vertices $\lbrace o,\, v,\, v_1 \rbrace$ is similar to the triangle $T(o,\widetilde{v},\widetilde{v}_1)$; see Figure \ref{Triangles}. Given that $|v|_2<|\widetilde{v}|_2$, we must also have $|v_1|_2 < |\widetilde{v}_1|_2$. Continuing this argument recursively, the similarity of the triangle $T(o,v_i,v_{i+1})$ to the triangle $T(o,\widetilde{v}_i,\widetilde{v}_{i+1})$ implies $|v_{i+1}|_2 < |\widetilde{v}_{i+1}|_2$ for $1\leq i\leq k-1$. But then $|\widetilde{v}|_2 = |v_k|_2 < |\widetilde{v}_k|_2 = |v|_2$, which is a contradiction. 
\begin{figure}
\includegraphics[width=0.75\linewidth]{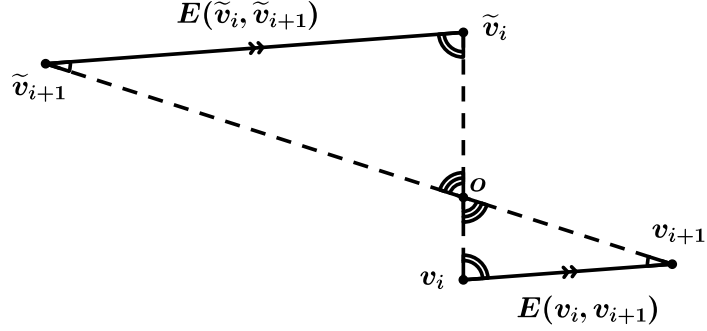}
\caption{ The triangle $T(o,v_i,v_{i+1})$ is similar to the triangle $T(o,\widetilde{v}_i,\widetilde{v}_{i+1})$ for $0\leq i\leq k-1$. }
\label{Triangles}
\end{figure}

\section{Dual Quermassintegrals of Sections}

Throughout this section, let $K\subset\mathbb{R}^n$ be a convex body containing the origin in its interior. We consider the radial sum
\begin{align*}
K\, \widetilde{+}\, t B_2^n(o,1) := \{o\} \cup \big\lbrace x\in\mathbb{R}^n\backslash\{o\} : |x|_2 \leq \rho_K(x / |x|_2) + t \big\rbrace 
	, \qquad t\geq 0,
\end{align*}
where
\begin{align*}
\rho_K(\xi) := \max \lbrace a>0 : a\xi \in K \rbrace, \qquad \xi\in S^{n-1}, 
\end{align*}
is the radial function for $K$. The so-called dual quermassintegrals $\widetilde{W}_l(K)$ arise as coefficients in the expansion 
\begin{align*}
\mbox{vol}_n\Big( K\, \widetilde{+}\, t B_2^n(o,1) \Big) = \sum_{l=0}^n \genfrac(){0pt}{0}{n}{l} \widetilde{W}_l(K) \, t^l , \qquad t\geq 0.  
\end{align*}
Dual quermassintegrals (and, more generally, dual mixed volumes) were introduced by Lutwak \cite{L}. See \cite{Ga, Sc} for further details. There are many parallels between quermassintegrals and dual quermassintegrals, so it is natural to consider the conjecture of Makai et al. \cite{MMO} in the dual setting. We pose and solve such a question.

For each integer $0\leq l\leq n-2$ and $\xi\in S^{n-1}$, we define the function 
\begin{align*}
\widetilde{W}_{l,\xi}(t) := \widetilde{W}_l\Big( (K - t\xi)\cap\xi^\perp \Big) , \qquad t\in \mathbb{R}, 
\end{align*}
where $\widetilde{W}_l\big( (K - t\xi)\cap\xi^\perp \big)$ is the dual quermassintegral of the $(n-1)$-dimensional convex body $(K - t\xi)\cap\xi^\perp$ in $\xi^\perp$. It follows from the dual Kubota formula (e.g. Theorem A.7.2 in \cite{Ga}) and Brunn's Theorem that 
\begin{align}\label{dual MMO conjecture}
\widetilde{W}_l(K\cap\xi^\perp) = \widetilde{W}_{l,\xi} (0)
	= \max_{t\in\mathbb{R}} \widetilde{W}_{l,\xi}(t) \quad \mbox{for all} \quad \xi\in S^{n-1} 
\end{align}
whenever $K$ is origin-symmetric. For $l=0$, (\ref{dual MMO conjecture}) is equivalent to (\ref{MMO}). We prove the converse statement when $K$ is a $C^1$ convex body; that is, the boundary of $K$ is a $C^1$ manifold, or equivalently $\rho_K\in C^1(S^{n-1})$.

\begin{thm}\label{Main Dual}
Suppose $K\subset\mathbb{R}^n$ is a $C^1$ convex body containing the origin in its interior. If $K$ satisfies (\ref{dual MMO conjecture}) for some $1\leq l\leq n-2$, then necessarily $K=-K$. 
\end{thm}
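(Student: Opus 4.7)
The plan is to translate the maximality hypothesis into a first-order identity for the radial function $\rho_K$, and then to decompose that identity in spherical harmonics and invoke the Funk-Hecke formula.

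First I would express the dual quermassintegral of the $(n-1)$-dimensional body $(K-t\xi)\cap\xi^\perp$ inside $\xi^\perp$ via the standard polar integral
$$\widetilde{W}_{l,\xi}(t) \;=\; \frac{1}{n-1}\int_{S^{n-1}\cap\xi^\perp}\rho_{(K-t\xi)\cap\xi^\perp}(\theta)^{\,n-1-l}\,d\sigma(\theta).$$
For $\theta\in S^{n-1}\cap\xi^\perp$, the radius $a(t)=\rho_{(K-t\xi)\cap\xi^\perp}(\theta)$ is the unique positive root of $\sqrt{a^{2}+t^{2}}=\rho_{K}\bigl((a\theta+t\xi)/\sqrt{a^{2}+t^{2}}\bigr)$, and since $\rho_K\in C^1(S^{n-1})$ the implicit function theorem yields
$$a'(0)\;=\;\frac{\nabla_{S^{n-1}}\rho_K(\theta)\cdot\xi}{\rho_K(\theta)},$$
jointly continuous in $\theta$. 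Because $t=0$ is an interior maximum of $\widetilde{W}_{l,\xi}$, differentiating under the integral (justified by a uniform mean-value bound on $a(t)-a(0)$) gives
$$\int_{S^{n-1}\cap\xi^\perp}\rho_K(\theta)^{\,n-3-l}\bigl(\nabla_{S^{n-1}}\rho_K(\theta)\cdot\xi\bigr)\,d\sigma(\theta)\;=\;0 \qquad \forall\,\xi\in S^{n-1}. \qquad (\ast)$$

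Next, set $g:=\rho_K^{\,n-2-l}/(n-2-l)$ if $1\leq l\leq n-3$ and $g:=\log\rho_K$ if $l=n-2$, so that the integrand of $(\ast)$ is $\nabla_{S^{n-1}}g(\theta)\cdot\xi$. Decomposing $g=\sum_{k\geq 0}g_k$ into spherical harmonics and extending each $g_k$ to a homogeneous harmonic polynomial $p_k$ of degree $k$ on $\mathbb{R}^n$, the spherical and Euclidean $\xi$-directional derivatives of $p_k$ agree on $\xi^\perp$. Hence the degree-$k$ contribution to $(\ast)$ is
$$F_k(\xi)\;=\;V_k(\xi)\cdot\xi, \qquad V_k(\xi)\;:=\;\int_{S^{n-1}\cap\xi^\perp}\nabla p_k(\theta)\,d\sigma(\theta).$$
Each $\partial_{x_j}p_k$ is a homogeneous harmonic polynomial of degree $k-1$, so the Funk-Hecke formula (applied component-wise) gives $V_k(\xi)=\lambda_{k-1}\nabla p_k(\xi)$, where $\lambda_{k-1}$ is the spherical Radon transform eigenvalue on degree-$(k-1)$ harmonics; Euler's identity then yields $F_k(\xi)=k\,\lambda_{k-1}\,g_k(\xi)$.

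Because spherical harmonics of distinct degrees are orthogonal, $(\ast)$ forces $F_k\equiv 0$ for every $k\geq 0$. Since $\lambda_{k-1}\neq 0$ precisely when $k-1$ is even, this gives $g_k\equiv 0$ for all odd $k\geq 1$, i.e.\ $g$ is even on $S^{n-1}$; as $g$ is a strictly monotone function of the positive quantity $\rho_K$, it follows that $\rho_K$ is even, hence $K=-K$. The main obstacle I anticipate is purely technical: carefully justifying the implicit-function-theorem step and the differentiation under the integral in the $C^1$ setting, in particular obtaining uniformity in $\theta$. Once $(\ast)$ is in hand, the spherical-harmonic/Funk-Hecke reduction is standard.
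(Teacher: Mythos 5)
Your argument is correct and reaches the paper's conclusion, but by a more self-contained route. The paper does not compute the first variation of $\widetilde{W}_{l,\xi}$ directly: it quotes Yaskin's fractional-derivative formula from \cite{Y0}, which relates $\int_{S^{n-1}} H_m^n(\xi)\,\widetilde{W}_{l,\xi}^{(q)}(0)\,d\xi$ to $\int_{S^{n-1}} H_m^n(\xi)\,\rho_K^{n-1-q-l}(\xi)\,d\xi$ for $-1<q<0$, analytically continues in $q$, and lets $q\to 1$ (with L'Hospital producing the $\log\rho_K$ case for $l=n-2$). You instead derive the $q=1$ identity from scratch: the implicit-function-theorem computation of $a'(0)$ and the resulting identity $(\ast)$ are exactly the first-order information the paper extracts from \cite{Y0}, and your Funk--Hecke/Euler computation $F_k = k\,\lambda_{k-1}\,g_k$ with $\lambda_{k-1}\neq 0$ precisely for even $k-1$ is the same multiplier mechanism (the paper's $\lambda_m(1)\neq 0$ for odd $m$). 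What your route buys is independence from \cite{Y0} and from fractional derivatives altogether, and it makes the geometric content of the hypothesis transparent; what the paper's route buys is brevity given the reference, plus a formula valid for a whole range of $q$.

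One step deserves more care than you give it. You pass from $(\ast)$ to ``the degree-$k$ contribution is $F_k$'' by applying the transform termwise to the harmonic expansion $g=\sum_k g_k$; since $\rho_K$ is only $C^1$, the gradient series $\sum_k \nabla_{S^{n-1}} g_k$ need not converge, so this termwise application is not literally justified. The standard repair is dual: fix a spherical harmonic $H_m$ of odd degree, pair $(\ast)$ against $H_m(\xi)$, use Fubini over the incidence set $\{(\xi,\theta): \theta\perp\xi\}$, and observe that the resulting operator commutes with rotations, hence acts on each harmonic space by the scalar $m\lambda_{m-1}$ you computed; this yields $m\lambda_{m-1}\int_{S^{n-1}} H_m g = 0$ directly, without summing the series. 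This is routine, so I regard it as a gap in exposition rather than in the mathematics.
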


The proof of Theorem \ref{Main Dual} follows from formulas derived in \cite{Y0}. These formulas involve spherical harmonics, and the fractional derivatives of $\widetilde{W}_{l,\xi}$ at $t=0$. Recall that a spherical harmonic of dimension $n$ and degree $m$ is the restriction to $S^{n-1}$ of a real-valued harmonic and homogeneous polynomial of degree $m$ in $n$ variables. Importantly, there is an orthogonal basis of $L^2(S^{n-1})$ consisting of spherical harmonics. Furthermore, any two spherical harmonics of the same dimension and different degrees are orthogonal. The standard reference for spherical harmonics in convex geometry is \cite{Gr}.

Let $h$ be an integrable function on $\mathbb{R}$ which is $m$ times continuously differentiable in a neighbourhood of zero. Let $q\in\mathbb{C}\backslash \{ 0,1,\ldots, m\}$ with real part $-1< \mbox{Re}(q) < m+1$. The fractional derivative of $h$ of order $q$ at zero is given by 
\begin{align*}
h^{(q)}(0) =& \frac{1}{\Gamma(-q)} 
	\int_0^1 t^{-1-q} \left( h(t) - \sum_{k=0}^m \frac{d^k}{ds^k} h(s)\Big|_{s=0} \frac{t^k}{k!} \right) \, dt \\
&+ \frac{1}{\Gamma(-q)} \int_1^\infty t^{-1-q} h(t) \, dt 
	+ \frac{1}{\Gamma(-q)} \sum_{k=0}^m \frac{1}{k!(k-q)} \frac{d^k}{dt^k} h(t)\Big|_{t=0} ;
\end{align*}
see, for example, \cite{K}. Defining $h^{(k)}(0)$ by the limit for $k=0,1,\ldots,m$, we get an analytic function $q\mapsto h^{(q)}(0)$ for $q\in\mathbb{C}$ with $-1< \mbox{Re}(q) < m+1$. Fractional derivatives extend the notion of classical differentiation:
\begin{align*}
h^{(k)}(0) = (-1)^k \frac{d^k}{dt^k} h(t)\Big|_{t=0} \quad \mbox{for} \quad k\in\mathbb{N}.
\end{align*}
Note that $\widetilde{W}_{l,\xi}$ is continuously differentiable in a neighbourhood of zero when $K$ is $C^1$, so we can consider the fractional derivatives of $\widetilde{W}_{l,\xi}$ at zero of order $q$, $-1<\mbox{Re}(q) < 2$.

\begin{proof}[Proof of Theorem \ref{Main Dual}]
It is proven in \cite{Y0} (at the bottom of page 8, in their Theorem 2) that 
\begin{align}\label{Vlad's formula}
\int_{S^{n-1}} H_m^n(\xi)\, \widetilde{W}_{l,\xi}^{(q)}(0) \, d\xi 
	= \frac{ (n-1-l) \lambda_m(q) }{ ( n-1-q-l )(n-1) } \int_{S^{n-1}} H_m^n(\xi)\, \rho_K^{ n-1-q-l }(\xi)\, d\xi 
\end{align}
for all $-1<q<0$ and spherical harmonics $H_m^n$ of dimension $n$ and odd degree $m$. The multipliers $\lambda_m(q)$ in (\ref{Vlad's formula}) come from an application of the Funke-Hecke Theorem. Let $P_m^n$ denote the Legendre polynomial of dimension $n$ and odd degree $m$. It is shown in \cite{Y0} (on page 7) that, explicitly, $\lambda_m(q)$ is the fractional derivative of $f(t)=P_m^n(t)(1-t^2)^{(n-2-l)/2}$ of order $q$ at $t=0$. Therefore, $q\mapsto\lambda_m(q)$ is analytic, and (\ref{Vlad's formula}) can be extended to $-1 < q < 2$, $q\neq n-1-l$.

Observe that for odd integers $m$, 
\begin{align*}
\lim_{q\rightarrow 1} \int_{S^{n-1}} H_m^n(\xi)\, \rho_K^{ n-1-q-l }(\xi)\, d\xi 
	= \int_{S^{n-1}} H_m^n(\xi)\, d\xi = 0
\end{align*}
because of the orthogonality of spherical harmonics with different degrees. Taking the limit as $q$ approaches $1$ in (\ref{Vlad's formula}), we get 
\begin{align}\label{Vlad's formula limit}
&- \int_{S^{n-1}} H_m^n(\xi) \widetilde{W}_{l,\xi}'(0) \, dt = \int_{S^{n-1}} H_m^n(\xi) \widetilde{W}_{l,\xi}^{(1)}(0) \, dt \nonumber \\
&= \begin{cases}
\frac{ (n-1-l) \lambda_m(1) }{ (n-2-l) (n-1) } \int_{S^{n-1}} H_m^n(\xi) \rho_K^{n-2-l}(\xi) \, d\xi 
	\quad &\mbox{if} \quad l\neq n-2; \\
\frac{ \lambda_m(1) }{ n-1 } \int_{S^{n-1}} H_m^n(\xi) \log\big( \rho_K(\xi) \big) \, d\xi \quad &\mbox{if} \quad l=n-2,
\end{cases}
\end{align}
for all odd integers $m$. We use L'Hospital's rule to evaluate the limit for the case $l=n-2$.

Calculating 
\begin{align*}
\lambda_m(1) = f^{(1)}(0) = - \frac{d}{dt} P_m^n(t) (1-t^2)^{(n-2-l)/2} \Big|_{t=0} = - \frac{d}{dt} P_m^n(t) \Big|_{t=0} ,
\end{align*}
it then follows from Lemma 3.3.9 and Lemma 3.3.8 in \cite{Gr} that $\lambda_m(1)\neq 0$ for odd $m$. As (\ref{dual MMO conjecture}) implies $\widetilde{W}_{l,\xi}'(0) = 0$ for all $\xi\in S^{n-1}$, we conclude from (\ref{Vlad's formula limit}) and $\lambda_m(1)\neq 0$ that 
\begin{itemize}
\item if $l\neq n-2$, the spherical harmonic expansion of $\rho_K^{n-2-l}$ does not have any harmonics of odd degree;
\item if $l=n-2$, the spherical harmonic expansion of $\log(\rho_K)$ does not have any harmonics of odd degree. 
\end{itemize}
Consequently, $\rho_K$ must be an even function, so $K$ is origin-symmetric. 
\end{proof}

\begin{rem*}
It can be seen that Theorem \ref{Main Dual} is actually true for $C^1$ star bodies, i.e. compact sets with positive and $C^1$ radial functions. However, it is not necessary for origin-symmetric star bodies to satisfy (\ref{dual MMO conjecture}). 
\end{rem*}


\begin{thebibliography}{00}


\bibitem{Fa}
{\sc K.~J.~Falconer}, {\em Applications of a result on spherical integration to the theory of convex sets}, Amer. Math. Monthly 90 (1983), 690--693.


\bibitem{Ga}
{\sc R.~Gardner}, {\em Geometric Tomography}, second edition, Cambridge University Press, New York, 2006. 


\bibitem{Gr}
{\sc H.~Groemer}, {\em Geometric Applications of Fourier series and Spherical Harmonics}, Cambridge University Press, Cambridge, 1996. 


\bibitem{K}
{\sc A.~Koldobsky}, {\em Fourier Analysis in Convex Geometry}, American Mathematical Society, Providence RI, 2005. 


\bibitem{L}
{\sc E.~Lutwak}, {\em Dual mixed volumes}, Pacific J. Math. {\bf 58} (1975), 531--538.


\bibitem{MM}
{\sc E.~Makai and H.~Martini}, {\em Centrally symmetric convex bodies and sections having maximal quermassintegrals}, Studia Sci. Math. Hungar. {\bf 49} (2012), 189--199.


\bibitem{MMO}
{\sc E.~Makai, H.~Martini, T.~\'Odor}, {\em Maximal sections and centrally symmetric bodies}, Mathematika {\bf 47} (2000), 19--30.


\bibitem{MyR}
{\sc S.~Myroshnychenko and D.~Ryabogin}, {\em On polytopes with congruent projections or sections}, Adv. Math. {\bf 325} (2018), 482--504. 


\bibitem{RY}
{\sc D.~ Ryabogin and V.~Yaskin}, {\em Detecting symmetry in star bodies}, J. Math. Anal. Appl. {\bf 395} (2012), {509--514}.


\bibitem{Sc}
{\sc R.~Schneider}, {\em Convex Bodies: The Brunn-Minkowski Theory}, Cambridge University Press, Cambridge, 2014.


\bibitem{SY}
{\sc M.~Stephen and V.~Yaskin}, {\em Stability results for sections of convex bodies}, Trans. Amer. Math. Soc. {\bf 369} (2017), 6239--6261.


\bibitem{Y0}
{\sc V.~Yaskin}, {\em An extension of polynomial integrability to dual quermassintegrals}, available at arXiv:1803.00199 [math.MG].


\bibitem{Y1}
{\sc V.~Yaskin}, {\em On perimeters of sections of convex polytopes}, J. Math. Anal. Appl. {\bf 371} (2010), 447--453. 


\bibitem{Y2}
{\sc V.~Yaskin}, {\em Unique determination of convex polytopes by non-central sections}, Math. Ann. {\bf 349} (2011), 647--655. 


\bibitem{YY}
{\sc V.~Yaskin and M.~Yaskina}, {\em Thick sections of convex bodies}, Adv. in Appl. Math. {\bf 71} (2015), 174--189. 


\bibitem{Z}
{\sc G.~Zhu}, {\em The logarithmic Minkowski problem for polytopes}, Adv. Math. {\bf 262} (2014), 909-931. 


\end{thebibliography}
\end{document}